\newtheorem{theorem}{Theorem}[section]
\newtheorem{lem}[theorem]{Lemma}
\theoremstyle{Corollary}
\newtheorem{cor}[theorem]{Corollary}
\newtheorem{prop}[theorem]{Proposition}
\newtheorem{definition}[theorem]{Definition}
\newtheorem{example}[theorem]{Example}
\let\th\theta
\def\sl{\Delta_{\theta}}
\def\ff{|\nabla_{\theta} \varphi_\lambda|^2}
\def\ii{\sqrt{-1}}
\let\a\alpha
\let\b\beta
\numberwithin{equation}{section}
\begin{document}

\title{CR Yamabe constant, CR Yamabe flow and its soliton}

\author{Pak Tung Ho}
\address{Department of Mathematics, Sogang University, Seoul
04107, Korea}

\email{ptho@sogang.ac.kr, paktungho@yahoo.com.hk}

\author{Kunbo Wang}
\address{College of Sciences, China Jiliang University, Hangzhou {\rm310018}, China.}
\email{wangkb@cjlu.edu.cn, 21235005@zju.edu.cn}

\subjclass[2010]{32V05,32V20,53C44}

\date{28th July, 2019.}

\keywords{CR Yamabe problem; soliton; CR Yamabe flow}

\begin{abstract}
On a  compact strictly pseudoconvex CR manifold $(M,\th)$, we consider the CR Yamabe constant of its infinite conformal covering.
By using the  maximum principles, we then prove a uniqueness theorem for the CR Yamabe flow
on a complete noncompact CR manifold.
Finally we obtain some properties of the CR Yamabe soliton on complete noncompact CR manifolds.

\end{abstract}

\maketitle

\section{Introducton}

Let $(M,\theta)$ be a compact pseudoconvex manifold of real dimension $2n+1$ with
the contact form $\theta$.
Given $0<u\in C^\infty(M)$, we can define the energy by
\begin{equation*}
E_{(M,\theta)}(u)=\int_M\left((2+\frac{2}{n})|\nabla_\theta u|^2+R_\theta u^2\right)dV_\theta
\Bigg/\left(\int_Mu^{2+\frac{2}{n}}dV_\theta\right)^{\frac{n}{n+1}}.
\end{equation*}
The CR Yamabe constant of $(M,\theta)$ is then  defined as
\begin{equation}\label{CR_Yamabe}
Y(M,\theta)=\inf\left\{E_{(M,\theta)}(u)~|~0<u\in C^\infty(M)\right\}.
\end{equation}
It follows from the definition that the CR Yamabe constant
depends only on the conformal class of $\theta$.
The CR Yamabe problem is to find a contact form in the conformal class such that
its Tanaka-Webster scalar curvature is constant.
If we write $\tilde{\theta}=u^{\frac{2}{n}}\theta$ for some $0<u\in C^\infty(M)$, then
their Tanaka-Webster scalar curvatures are related by
\begin{equation}\label{CR_Yamabe_eqn}
-(2+\frac{2}{n})\Delta_\theta u+R_\theta u
=R_{\tilde{\theta}}u^{1+\frac{2}{n}}.
\end{equation}
Therefore, the CR Yamabe problem is to find a positive smooth function $u$ in $M$
satisfying (\ref{CR_Yamabe_eqn}) with $R_{\tilde{\theta}}$ being constant.
The following theorem was proved by Jerison and Lee: (See Theorem 3.4 in \cite{Jerison&Lee3})

\begin{theorem}[Jerison-Lee]\label{thm1}
Given a compact CR manifold $(M,\theta)$, we have\\
(i) $Y(M,\theta)\leq Y(S^{2n+1},\theta_{S^{2n+1}})$ where $Y(S^{2n+1},\theta_{S^{2n+1}})$
is the CR Yamabe constant of the CR sphere $(S^{2n+1},\theta_{S^{2n+1}})$; and\\
(ii) if $Y(M,\theta)< Y(S^{2n+1},\theta_{S^{2n+1}})$
then the infimum in \eqref{CR_Yamabe} is attained by a positive
$C^\infty$ solution to \eqref{CR_Yamabe_eqn}. Thus, the contact form $\tilde{\theta}=u^{\frac{2}{n}}\theta$ has constant
Tanaka-Webster scalar curvature.
\end{theorem}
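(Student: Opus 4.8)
The plan is to follow the subcritical approximation scheme, the CR analogue of the Yamabe--Trudinger--Aubin method. The natural setting is the Folland--Stein Sobolev space $S^1(M)$ adapted to the subelliptic operator $\Delta_\theta$, and one notes that the exponent $2+\frac{2}{n}=\frac{2Q}{Q-2}$, with $Q=2n+2$ the homogeneous dimension, is \emph{exactly} the critical Sobolev exponent, so that the embedding $S^1(M)\hookrightarrow L^{2+\frac{2}{n}}(M)$ is continuous but not compact. This loss of compactness at the critical exponent is the whole difficulty, and it is precisely what the strict inequality in (ii) is designed to overcome.

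For part (i), I would construct a family of test functions concentrating at a single point $x_0\in M$. Using Jerison--Lee pseudohermitian normal coordinates, the contact form near $x_0$ osculates the flat Heisenberg structure to high order, so that after a dilation the functional $E_{(M,\theta)}$ evaluated on such test functions converges to the Folland--Stein functional on the Heisenberg group $\mathbb{H}^n$. Since $(\mathbb{H}^n,\theta_{\mathbb{H}})$ is CR-equivalent to the sphere minus a point via the Cayley transform, and since the extremals of the sharp Folland--Stein inequality realize $Y(S^{2n+1},\theta_{S^{2n+1}})$, plugging the rescaled extremal (the CR ``bubble'') into $E_{(M,\theta)}$ yields $Y(M,\theta)\le Y(S^{2n+1},\theta_{S^{2n+1}})+o(1)$ as the concentration parameter degenerates, whence the claimed bound.

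For part (ii), I would first solve the subcritical problems. For each subcritical $q$ with $2<q<2+\frac{2}{n}$ set $E_q(u)=\int_M\bigl((2+\frac{2}{n})|\nabla_\theta u|^2+R_\theta u^2\bigr)\,dV_\theta \big/ \bigl(\int_M u^{q}\,dV_\theta\bigr)^{2/q}$ and let $Y_q$ be its infimum over positive $u\in S^1(M)$. Because the embedding $S^1(M)\hookrightarrow L^{q}(M)$ is compact for subcritical $q$, a direct minimization produces a positive minimizer $u_q$, which after normalizing $\int_M u_q^{\,q}\,dV_\theta=1$ solves the subcritical Euler--Lagrange equation $-(2+\frac{2}{n})\Delta_\theta u_q+R_\theta u_q=Y_q\,u_q^{\,q-1}$; one then checks $Y_q\to Y(M,\theta)$ as $q\uparrow 2+\frac{2}{n}$.

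The crux is a uniform a priori bound on $u_q$ as $q$ approaches the critical exponent. I would argue by contradiction: if $\max_M u_q\to\infty$, rescale around the concentration point by the maximum; by subelliptic regularity (Folland--Stein estimates plus bootstrapping) the rescaled solutions converge in $C^\infty_{loc}$ to a nontrivial solution on $\mathbb{H}^n$, and a quantitative accounting of the energy shows that each concentration bubble carries at least the critical energy $Y(S^{2n+1},\theta_{S^{2n+1}})$, forcing $Y(M,\theta)\ge Y(S^{2n+1},\theta_{S^{2n+1}})$ and contradicting the hypothesis. Hence $u_q$ is uniformly bounded above; by the subelliptic Harnack inequality it is bounded below as well, so along a subsequence $u_q\to u$ with $u>0$ solving the critical equation \eqref{CR_Yamabe_eqn} with $R_{\tilde\theta}=Y(M,\theta)$ constant, and subelliptic regularity applied to \eqref{CR_Yamabe_eqn} upgrades $u$ to $C^\infty$. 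The main obstacle I expect is exactly this blow-up/a priori estimate: one must run concentration-compactness with the merely hypoelliptic operator $\Delta_\theta$, which demands the sharp Folland--Stein inequality with its extremals classified on $\mathbb{H}^n$ and the full strength of subelliptic regularity, while controlling the error terms from $R_\theta$ and from the deviation of the osculating Heisenberg structure so that the threshold $Y(S^{2n+1},\theta_{S^{2n+1}})$ is attained exactly. This is the delicate point where the strict inequality is consumed.
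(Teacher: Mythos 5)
The paper itself offers no proof of this statement: it is quoted verbatim from Jerison--Lee (Theorem 3.4 of \cite{Jerison&Lee3}), so the comparison must be with the original argument there. Your outline reproduces the original subcritical approximation scheme in its broad structure, but diverges at the crucial a priori estimate, and it is worth recording both differences. For part (i) you invoke the extremals of the sharp Folland--Stein inequality; these were only classified in Jerison--Lee's later 1988 paper and are not needed here: since $Y(S^{2n+1},\theta_{S^{2n+1}})$ is by definition an infimum, one takes near-extremal test functions on the sphere, pushes them to the Heisenberg group by the Cayley transform, concentrates them by the parabolic dilations $(z,t)\mapsto(\lambda z,\lambda^2 t)$, and transplants them to $M$ via pseudohermitian normal coordinates, with the curvature term $\int_M R_\theta u^2\,dV_\theta$ entering at subcritical order; this is exactly Jerison--Lee's route and your bubble computation is a harmless strengthening of it. For part (ii), where you propose a blow-up/rescaling contradiction, Jerison--Lee instead prove an almost-sharp Folland--Stein inequality on $M$: for every $\epsilon>0$ there is $C_\epsilon$ with $\bigl(\int_M u^{2+\frac{2}{n}}dV_\theta\bigr)^{\frac{n}{n+1}}\leq \frac{1+\epsilon}{Y(S^{2n+1},\theta_{S^{2n+1}})}\int_M\bigl((2+\frac{2}{n})|\nabla_\theta u|^2\bigr)dV_\theta+C_\epsilon\int_M u^2 dV_\theta$, obtained by localization to coordinate charts where the structure osculates $\mathbb{H}^n$; testing the subcritical Euler--Lagrange equation against powers $u_q^{1+2\delta}$ and using the strict hypothesis $Y(M,\theta)<Y(S^{2n+1},\theta_{S^{2n+1}})$ to absorb the critical term then yields uniform $L^p$ bounds above the critical exponent directly, after which Moser-type iteration, their subelliptic (Folland--Stein) regularity theory, and the Harnack inequality give smoothness, uniform bounds, and positivity of the limit. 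Your blow-up alternative is viable but you should make explicit the two ingredients it silently consumes: first, the identification of $Y(S^{2n+1},\theta_{S^{2n+1}})$ with the best constant of the Folland--Stein inequality on $\mathbb{H}^n$ (itself a theorem proved in \cite{Jerison&Lee3} via the Cayley transform), which is what lets you assign energy at least $Y(S^{2n+1},\theta_{S^{2n+1}})$ to a nontrivial limit solution without any classification of extremals; and second, a careful normalization of the rescaling as $q\uparrow 2+\frac{2}{n}$ so that the exponent gap does not degenerate the limit equation and the limiting profile genuinely solves the critical equation on $\mathbb{H}^n$. What each approach buys: the original avoids any Liouville-type or energy-quantization analysis and needs only the almost-sharp inequality plus regularity theory, while yours is the more modern concentration-compactness viewpoint, which generalizes better to compactness-of-solution-set questions but is strictly heavier machinery for this existence statement.
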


In view of Theorem \ref{thm1}, it is important to estimate
the CR Yamabe constant $Y(M,\theta)$. In section \ref{section2},
we consider the CR Yamabe constant of
conformal covering of $(M,\theta)$.

The flow approach has been introduced to study the CR Yamabe problem.
The normalized CR Yamabe flow is defined as
the evolution of the contact form $\theta(t)$:
\begin{equation}\label{0.1}
\frac{\partial}{\partial t}\theta(t)=-(R_{\theta(t)}-\overline{R}_{\theta(t)})\theta(t),~~\theta(t)|_{t=0}=\theta,
\end{equation}
where $R_{\theta(t)}$ is the Tanaka-Webster scalar curvature of $\theta(t)$,
and $\overline{R}_{\theta(t)}$ is the average of $R_{\theta(t)}$.
The CR Yamabe flow has been studied by many authors.
See \cite{Chang&Chiu&Wu,Ho2,Ho3,Ho4,Ho5,Sheng&Wang} and the references therein.

However, there has not been much work related to
the CR Yamabe problem or CR Yamabe flow on noncomapct manifold.
In \cite{Ho&Kim},
an existence result of the CR Yamabe problem on noncompact manifold was proved. As an analogue to the CR Yamabe flow on a compact CR manifold, one can define the (unnormalized) CR Yamabe flow as follows:
\begin{equation}\label{CRYF}
\frac{\partial}{\partial t}\theta(t)=-R_{\th(t)}\theta(t),
\end{equation}
where $R_{\theta(t)}$ is the Tanaka-Webster scalar curvature of $\theta(t)$.
By using the  maximum principles, we prove in section \ref{section3} a uniqueness theorem for the CR Yamabe flow \eqref{CRYF}
on noncompact manifold. See Theorem \ref{2.6}.

The CR Yamabe soliton is a self-similar solution to the (unnormalized) CR Yamabe flow.
More precisely,  $(M,\theta)$ is a CR Yamabe soliton if
there exist an infinitesimal contact diffeomorphism $X$ and a constant $\mu$ such that
\begin{equation}\label{1.1}
\begin{split}
R_\theta+\frac{1}{2}\mathcal{L}_X\theta&=\mu,\\
\mathcal{L}_XJ&=0.
\end{split}
\end{equation}
Here $R_\theta$ is the Tanaka-Webster scalar curvature of $\theta$, and $\mathcal{L}_X$ is
the Lie derivative with respect to $X$.
It follows from (\cite{Cao}) that, equivalently,
 $(M,\theta)$ is a CR Yamabe soliton if
there exist a real-valued smooth function $f$ and a constant $\mu$ such that
\begin{equation}\label{1.2}
\begin{split}
R_\theta+\frac{1}{2}f_0&=\mu,\\
f_{\alpha\alpha}+\sqrt{-1}A_{\alpha\alpha} f&=0~~\mbox{ for all }\alpha=1,2,...,n.
\end{split}
\end{equation}
Here $A_{\alpha_\beta}$ is the torsion of $\theta$, and
$f$ is called potential function. Sometimes we also say that $(M,\theta,f)$ is a CR Yamabe soliton
by specifying the potential function $f$.
The CR Yamabe soliton is called shrinking if $\mu>0$, steady if $\mu=0$, and expanding
if $\mu<0$ respectively.

The following lemma was proved in \cite{Cao}: (see Lemma 3.2 in \cite{Cao})

\begin{prop}\label{prop1.1}
A three-dimensional CR Yamabe soliton satisfies
\begin{equation}\label{1.5}
4\Delta_\theta R_\theta+2R_\theta(R_\theta-\mu)-(R_\theta)_0 f-\langle\nabla_\theta R_\theta, J(\nabla_\theta f)\rangle_{\theta}=0.
\end{equation}
\end{prop}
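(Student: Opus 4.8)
The plan is to reduce to the three-dimensional case $n=1$ and work in an admissible coframe $\{\theta,\theta^1,\theta^{\bar 1}\}$ with $h_{1\bar 1}=1$, where the two soliton equations \eqref{1.2} read $R_\theta+\tfrac12 f_0=\mu$ and $f_{11}+\sqrt{-1}A_{11}f=0$ (together with the conjugate $f_{\bar 1\bar 1}-\sqrt{-1}A_{\bar 1\bar 1}f=0$, using that $f$ is real). From the first equation I would record $f_0=2(\mu-R_\theta)$, so that horizontal and Reeb derivatives of $f_0$ become derivatives of $R_\theta$; in particular $(R_\theta)_{,1}=-\tfrac12 f_{0,1}$ and $(R_\theta)_0=-\tfrac12 f_{00}$. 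It is also worth translating the intrinsic quantities in \eqref{1.5} into the coframe: $\Delta_\theta R_\theta=(R_\theta)_{1\bar 1}+(R_\theta)_{\bar 1 1}$ and $\langle\nabla_\theta R_\theta,J(\nabla_\theta f)\rangle_\theta=\sqrt{-1}\big((R_\theta)_1 f_{\bar 1}-(R_\theta)_{\bar 1}f_1\big)$, the latter being manifestly real, so that \eqref{1.5} is a real scalar identity obtained by symmetrizing a complex one.

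The main computation is to differentiate the torsion equation $f_{11}=-\sqrt{-1}A_{11}f$ covariantly and take its divergence, obtaining $f_{11\bar 1}=-\sqrt{-1}\big(A_{11,\bar 1}f+A_{11}f_{\bar 1}\big)$. I would then apply the Tanaka--Webster commutation identities of Lee to reorder the third covariant derivatives on the left: commuting a conjugate index past a holomorphic one produces the Webster scalar curvature $R_\theta$ multiplying a first derivative of $f$ together with a Reeb term $\sqrt{-1}f_{1,0}$, while commuting the Reeb derivative $\nabla_0$ past the horizontal derivatives feeds back the torsion $A_{11}$. To turn the torsion divergence $A_{11,\bar 1}$ into curvature I would invoke the contracted second Bianchi identity in dimension three, which expresses $\nabla^{\bar 1}A_{11}$ in terms of $\nabla_1 R_\theta$ (and a Reeb contribution). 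Throughout I would use the first soliton equation to eliminate $f_0$ in favour of $R_\theta-\mu$, which is what converts the Reeb terms into the quadratic term $R_\theta(R_\theta-\mu)$ and the term $(R_\theta)_0 f$, and use the torsion equation a second time to trade any leftover factor $A_{11}f$ for $\sqrt{-1}f_{11}$.

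Finally I would add the complex conjugate of the resulting equation to obtain a real identity: the antisymmetric products of first derivatives assemble into $\langle\nabla_\theta R_\theta,J(\nabla_\theta f)\rangle_\theta$, the curvature commutators into $\Delta_\theta R_\theta$ and $R_\theta(R_\theta-\mu)$, and the second Reeb derivative into $(R_\theta)_0 f$, with the constants $4$ and $2$ fixed by the precise coefficient in the Bianchi identity. The hard part will be the commutator bookkeeping---especially the commutators between $\nabla_0$ and the horizontal derivatives, which generate the torsion terms that ultimately reproduce the mixed $R_\theta$--$f$ terms---and checking that the terms absent from \eqref{1.5}, notably the trace-type Hessian contributions of $f$ (the part of $f_{1\bar 1}$ that is not controlled by the soliton equations), cancel out after symmetrization. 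Getting the signs and coefficients in Lee's commutation formulas and in the contracted Bianchi identity exactly right is the only real subtlety; once these are in place the identity \eqref{1.5} follows by collecting terms.
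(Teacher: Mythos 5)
The first thing to note is that the paper contains no proof of Proposition \ref{prop1.1} at all: it is quoted verbatim from Lemma 3.2 of Cao--Chang--Chen \cite{Cao}, so the only meaningful comparison is with the argument given there. Your strategy belongs to the same family as that proof --- reduce to $n=1$ with $h_{1\bar 1}=1$, record $f_0=2(\mu-R_\theta)$ so Reeb derivatives of $f$ become derivatives of $R_\theta$, differentiate the torsion equation $f_{11}=-\sqrt{-1}A_{11}f$, reorder derivatives with Lee's commutation identities, and symmetrize at the end --- and your dictionary $\Delta_\theta R_\theta=(R_\theta)_{1\bar 1}+(R_\theta)_{\bar 1 1}$, $\langle\nabla_\theta R_\theta,J(\nabla_\theta f)\rangle_\theta=\sqrt{-1}\bigl((R_\theta)_1f_{\bar 1}-(R_\theta)_{\bar 1}f_1\bigr)$ is correct up to convention.

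However, the sketch fails at its pivotal step. The first-order contracted Bianchi identity you invoke does not exist in dimension three: Lee's identity reads (up to convention) $R_{,\alpha}-R_{\alpha\bar\beta,}{}^{\bar\beta}=\sqrt{-1}(n-1)A_{\alpha\beta,}{}^{\beta}$, and for $n=1$ the Ricci tensor is $R_\theta h_{1\bar 1}$, so both sides vanish identically; there is no identity trading $A_{11,\bar 1}$ for $(R_\theta)_1$. The identity that actually carries the proof in \cite{Cao} is the \emph{second-order} three-dimensional CR Bianchi identity $R_{,0}=A_{11,}{}^{11}+A_{\bar 1\bar 1,}{}^{\bar 1\bar 1}$, and, multiplied by $f$, it is the sole source of the term $(R_\theta)_0f$ in \eqref{1.5}: a term containing the \emph{undifferentiated} potential $f$ can only arise from differentiating the product $A_{11}f$ twice, never from commutators acting on $f$, which are linear in first and higher derivatives of $f$ (so your alternative attribution of $(R_\theta)_0f$ to a commutator-generated $f_{00}$ cannot occur, even though numerically $(R_\theta)_0=-\tfrac12 f_{00}$). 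Consequently the computation must be pushed to fourth order, $f_{11\bar 1\bar 1}=-\sqrt{-1}\bigl(A_{11,\bar 1\bar 1}f+2A_{11,\bar 1}f_{\bar 1}+A_{11}f_{\bar 1\bar 1}\bigr)$; the single divergence $f_{11\bar 1}$ you display is one derivative short of $\Delta_\theta R_\theta$, which involves two horizontal derivatives of $R_\theta\sim f_0$, and at third order nothing eliminates $A_{11,\bar 1}$. Finally, once the conjugate soliton equation $f_{\bar 1\bar 1}=\sqrt{-1}A_{\bar 1\bar 1}f$ is used on the term $A_{11}f_{\bar 1\bar 1}$, a $|A_{11}|^2f$ term appears that is absent from \eqref{1.5}; verifying its cancellation against the torsion terms produced by the $[\nabla_0,\nabla_1]$-type commutators is precisely the content of the lemma, which your outline defers wholesale to ``bookkeeping''. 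As it stands, then, the proposal identifies the right ingredients but would stall at the Bianchi step and does not constitute a proof.
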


By using Proposition \ref{prop1.1}, Cao, Chang and Chen proved in
\cite{Cao} that any compact three-dimensional CR Yamabe soliton must have constant
Tanaka-Webster scalar curvature. See also \cite{Ho} for another proof which is valid for all dimensions.
In view of these results, one would like to study the CR Yamabe soliton on noncompact manifold.
See \cite{Cao,Cao1} for results related to the noncompact CR Yamabe soliton.
In section \ref{section4}, we study the noncompact CR Yamabe solitons.
In particular, we are able to classify the CR Yamabe solitons on the Heisenberg group $\mathbb{H}^n$
equipped with the standard contact form. See Theorem \ref{main}.

This paper is organized as follows. In section \ref{section1}, we recall some basic concepts in CR geometry. In section \ref{section2}, we consider the CR Yamabe constant of conformal covering of $(M,\th)$. In section \ref{section3}, we prove a uniqueness theorem for the CR Yamabe flow \eqref{CRYF} on noncompact manifold. Finally, in section \ref{section4}, we study the CR Yamabe solitons on a complete noncompact CR manifold.

\section{Preliminaries and Notations}\label{section1}

Let $M$ be an orientable, real, $(2n+1)$-dimensional manifold. A CR structure on $M$ is given by a complex $n$-dimensional
subbundle $T_{1,0}$ of the complexified tangent bundle ${\mathbb C}TM$ of $M$, satisfying $T_{1,0}\cap T_{0,1}=\{0\}$, where
$T_{0,1}=\bar{T}_{1,0}$. We assume the CR structure is integrable, that is, $T_{1,0}$ satisfies the formal Frobenius condition
$[T_{1,0}, T_{0,1}]\subset T_{1,0}$. We set $G=Re(T_{1,0}\oplus T_{0,1})$, so that $G$ is a real $2n$-dimensional subbundle of $TM$.
Then $G$ carries a natural complex structure map: $J: G\rightarrow G$ given by $J(V+\bar{V})=\ii(V-\bar{V})$ for $V\in T_{1,0}$.

Let $E\subset T^{\ast}M$ denote the real line bundle $G^{\bot}$. Because we assume $M$ is orientable, and the complex structure $J$ induces an
orientation on $G$, $E$ has a global nonvanishing section. A choice of such a 1-form $\th$ is called a pseudohermitian structure on $M$. Associated with each such $\theta$ is the real symmetric bilinear form $L_\theta$ on $G$:
\begin{equation*}
L_\theta(V,W)=d\theta(V,JW),~~V,W\in G
\end{equation*}
called the $Levi-form$ of $\theta$. $L_\theta$ extends by complex linearity to $\mathbb{C}G$, and induces a Hermitian form on $T_{1,0}$, which we write
\begin{equation*}
L_\theta(V,\bar W)=-\ii d\theta(V,\bar W),~~V,W\in T_{1,0}
\end{equation*}
If $\theta$ is replaced by $\tilde\theta=f\theta$, $L_\theta$ changes conformally by $L_{\tilde\theta}=fL_\theta$. We will assume that $M$ is {\it{strictly pseudoconvex}}, that is, $L_\theta$ is positive definite for a suitable $\theta$. In this case, $\theta$ defines a contact structure on $M$, and we call $\theta$ a contact form. Then we define the volume form on $M$ as $dV_{\theta}=\theta\wedge d\theta^n$.

We can choose a unique $T$ called the characteristic direction such that $\theta(T)=1$, $d\theta(T, \cdot)=0$, and $TM=G\oplus \mathbb{R}T$. Then we can define a coframe $\{\theta, \theta^1, \theta^2, \cdots, \theta^n\}$ satisfying $\theta^\alpha(T)=0$, which is called admissible coframe. And its dual frame $\{T, Z_1, Z_2, \cdots, Z_n\}$ is called admissible frame. In this coframe, we have $d\theta=\ii h_{\alpha\bar\beta}\theta^\alpha\wedge\theta^{\bar\beta}$, $h_{\alpha\bar\beta}$ is a Hermitian matrix.

The sub-Laplacian operator $\Delta_{\th}$ is defined by
$$\int_M (\Delta_{\th} u)fdV_{\th}=-\int_M\langle du,df\rangle_{\th}dV_{\th},$$
for all smooth function $f$. Here $\langle , \rangle_{\th}$ is the inner product induced by $L_{\th}$. And we denote $|\nabla_{\th}u|^2=\langle du,du\rangle_{\th}$. Tanaka \cite{T} and Webster \cite{W} showed there is a natural connection in the bundle $T_{1,0}$ adapted to a pseudohermitian structure, which is called the Tanaka-Webster connection. To define this connection, we choose an admissible coframe $\{\th^{\a}\}$ and dual frame $\{Z_{\a}\}$ for $T_{1,0}$. Then there are uniquely determined 1-forms $\omega_{\a\bar{\b}}$, $\tau_{\a}$ on $M$, satisfying
\begin{eqnarray}
d\theta^\alpha &=& \omega^\alpha_\beta\wedge\theta^\beta+\theta\wedge\tau^\alpha,\\
dh_{\alpha\bar\beta} &=& h_{\alpha\bar\gamma}\omega_{\bar\beta}^{\bar\gamma}+\omega_{\alpha}^{\gamma}h_{\beta\bar\gamma},\\
\tau_\alpha\wedge\theta^\alpha &=& 0.
\end{eqnarray}
From the third equation, we can find $A_{\alpha\gamma}$, such that
$$\tau_\alpha=A_{\alpha\gamma}\theta^\gamma$$
and $A_{\alpha\gamma}=A_{\gamma\alpha}$. Here $A_{\alpha\gamma}$ is called the pseudohermitian torsion.
With this connection, the covariant differentiation is defined by
$$
D Z_\alpha=\omega_\alpha^\beta\otimes Z_\beta,~~~~DZ_{\bar\alpha}=\omega_{\bar\alpha}^{\bar\beta}\otimes Z_{\bar\beta},~~~~DT=0.
$$
$\{\omega^{\alpha}_\beta\}$ are called connection 1-forms.
For a smooth function $f$ on $M$, we write $f_\alpha=Z_\alpha f,~~f_{\bar\alpha}=Z_{\bar\alpha} f,~~f_0=Tf$, so that
$df=f_\alpha \theta_\alpha+f_{\bar\alpha} \theta_{\bar\alpha}+f_0 \theta$. The second covariant differential $D^2 f$ is the 2-tensor with components
\begin{equation*}
\begin{split}
f_{\alpha\beta} &=\overline{\bar f_{\bar\alpha\bar\beta}}=Z_\beta Z_\alpha f-\omega_\alpha^\gamma(Z_\beta) Z_\gamma f, ~~f_{\alpha\bar\beta} =\overline{\bar f_{\bar\alpha\beta}}=Z_{\bar\beta} Z_\alpha f-\omega_\alpha^\gamma(Z_{\bar\beta}) Z_\gamma f,\\
f_{0\alpha} &=\overline{\bar f_{0\bar\alpha}}=Z_\alpha Tf,~~f_{\alpha0}=\overline{\bar f_{\bar\alpha 0}}=TZ_\alpha f-\omega_\alpha^\gamma(T) Z_\gamma f,~~f_{00}=T^2 f.
\end{split}
\end{equation*}
$h_{\a\bar{\b}}$ and $h^{\a\bar{\b}}$ are used to lower and raise the indices. The connections forms also satisfy
$$
d\omega_\beta^\alpha-\omega_\beta^\gamma\wedge\omega_\gamma^\alpha=\frac{1}{2}R_{\beta~~\rho\sigma}^{~~\alpha}\theta^\rho\wedge\theta^{\sigma}+
\frac{1}{2}R_{\beta~~\bar\rho\bar\sigma}^{~~\alpha}\theta^{\bar\rho}\wedge\theta^{\bar\sigma}
+R_{\beta~~\rho\bar\sigma}^{~~\alpha}\theta^\rho\wedge\theta^{\bar\sigma}+R_{\beta~~\rho 0}^{~~\alpha}\theta^\rho\wedge\theta-R_{\beta~~\bar\sigma 0}^{~~\alpha}\theta^{\bar\sigma}\wedge\theta.
$$
We call $R_{\beta\bar\alpha\rho\bar\sigma}$ the Tanaka-Webster curvature. Contractions of the Tanaka-Webster curvature yield the Tanaka-Webster Ricci curvature $R_{\rho\bar\sigma}=R_{\alpha~~\rho\bar\sigma}^{~~\alpha}$, or $R_{\rho\bar\sigma}=h^{\alpha\bar\beta}R_{\alpha\bar\beta\rho\bar\sigma}$, and the Tanaka-Webster scalar curvature $R=h^{\rho\bar\sigma}R_{\rho\bar\sigma}$.

And we denote $d_{\th}(\cdot,\cdot)$ to be the Carnot-Carath\'{e}odory distance with respect to $\th$.

\section{The CR Yamabe constant of conformal covering}\label{section2}

When the CR Yamabe constant $Y(M,\theta)<0$,
any two contact forms with constant Tanaka-Webster scalar curvature are identical
up to a scaling (see Theorem 7.1 in \cite{Jerison&Lee3} or Theorem 1.3 in \cite{Ho2} for example).
Hence, for a finite $k$-fold conformal covering $(\tilde{M},\tilde{\theta})$ of $(M,\theta)$,
we have
\begin{equation*}
Y(\tilde{M},\tilde{\theta})=k^{\frac{1}{n+1}}Y(M,\theta).
\end{equation*}

On the other hand, when the CR Yamabe constant $Y(M,\theta)>0$,
we cannot expect any similar explicit relations between
$Y(\tilde{M},\tilde{\theta})$ and $Y(M,\theta)$. One reason is that
the uniqueness for the constant Tanaka-Webster scalar curvature
mentioned above does not hold in general. However, we still have the following Aubin's Lemma for the CR case:

\begin{lem}[CR Aubin's Lemma]\label{Alem2}
Let $(M,\theta)$ be a compact pseudoconvex CR manifold of dimension $2n+1$
with $Y(M,\theta)>0$,
and $(\tilde{M},\tilde{\theta})$ a nontrivial finite conformal covering of $(M,\theta)$.
Then
$$Y(M,\theta)<Y(\tilde{M},\tilde{\theta}).$$
\end{lem}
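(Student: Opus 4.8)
The plan is to prove the inequality by a \emph{descent construction}: from an arbitrary positive test function on $\tilde M$ I build a positive test function on $M$ whose CR Yamabe energy is strictly smaller, obtain the non-strict comparison $Y(M,\theta)\le Y(\tilde M,\tilde\theta)$ by taking an infimum, and finally upgrade it to a strict inequality using the attainment statement in Theorem \ref{thm1}.

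First I would normalize the background metric. Since $Y(M,\theta)>0$, the bottom eigenvalue of the conformal sub-Laplacian $-(2+\frac{2}{n})\Delta_\theta+R_\theta$ is positive; replacing $\theta$ by $u_1^{2/n}\theta$ for a positive first eigenfunction $u_1$ gives $R_\theta>0$ without changing either CR Yamabe constant (by conformal invariance) and replaces $\tilde\theta$ by $\pi^{*}(u_1^{2/n}\theta)$, so I may assume $R_\theta>0$ and $\tilde\theta=\pi^{*}\theta$. Write $p=2+\frac{2}{n}$ and let $k\ge2$ be the number of sheets of the covering $\pi\colon\tilde M\to M$. Given $0<\tilde v\in C^\infty(\tilde M)$, define $0<w\in C^\infty(M)$ by
\[
w(x)=\Big(\sum_{y\in\pi^{-1}(x)}\tilde v(y)^{p}\Big)^{1/p}.
\]
Smoothness follows from local sections of $\pi$, and since $\pi$ is a local CR isometry with $dV_{\tilde\theta}=\pi^{*}dV_\theta$ and $R_{\tilde\theta}=R_\theta\circ\pi$, we have the fibre-integration identity $\int_{\tilde M}g\,dV_{\tilde\theta}=\int_M\big(\sum_{y\in\pi^{-1}(x)}g(y)\big)\,dV_\theta$ for every $g$.

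I then compare the three ingredients of $E_{(M,\theta)}$ and $E_{(\tilde M,\tilde\theta)}$ fibrewise, writing $v_i=\tilde v(y_i)$ and $F=w^{p}=\sum_i v_i^{p}$ over a fibre $\{y_1,\dots,y_k\}$. The denominators \emph{agree}, since $\int_M w^{p}\,dV_\theta=\int_{\tilde M}\tilde v^{p}\,dV_{\tilde\theta}$. For the zeroth-order term, concavity of $t\mapsto t^{2/p}$ (note $2/p<1$) gives the pointwise bound $w^{2}=(\sum_i v_i^{p})^{2/p}\le\sum_i v_i^{2}$, and because $R_\theta>0$ and each fibre carries $k\ge2$ positive values this is strict after integration, so $\int_M R_\theta w^{2}\,dV_\theta<\int_{\tilde M}R_{\tilde\theta}\tilde v^{2}\,dV_{\tilde\theta}$. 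For the gradient term I transport each $X_i=\nabla_{\tilde\theta}\tilde v(y_i)$ to $x$ by $d\pi$ and compute $\nabla_\theta w=F^{1/p-1}\sum_i v_i^{p-1}X_i$; combining the triangle inequality, Cauchy--Schwarz, and the elementary inequality $\sum_i a_i^{r}\le(\sum_i a_i)^{r}$ for $a_i\ge0$ and $r=2-\frac{2}{p}\ge1$ (applied to $a_i=v_i^{p}$, yielding $\sum_i v_i^{2(p-1)}\le F^{2-2/p}$), I get the pointwise estimate $|\nabla_\theta w|^{2}\le\sum_i|X_i|^{2}$, hence $\int_M|\nabla_\theta w|^{2}\,dV_\theta\le\int_{\tilde M}|\nabla_{\tilde\theta}\tilde v|^{2}\,dV_{\tilde\theta}$. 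Together these give $E_{(M,\theta)}(w)<E_{(\tilde M,\tilde\theta)}(\tilde v)$ for every $\tilde v$, whence $Y(M,\theta)\le Y(\tilde M,\tilde\theta)$.

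The remaining step, which I expect to be the main obstacle, is upgrading this to a strict inequality, since the per-function gap can degenerate in the limit. By Theorem \ref{thm1}(i) we have $Y(\tilde M,\tilde\theta)\le Y(S^{2n+1},\theta_{S^{2n+1}})$; if this is strict, Theorem \ref{thm1}(ii) furnishes a positive minimizer $\tilde v_0$, and then $Y(M,\theta)\le E_{(M,\theta)}(w_0)<E_{(\tilde M,\tilde\theta)}(\tilde v_0)=Y(\tilde M,\tilde\theta)$, as wanted. The delicate borderline case is $Y(\tilde M,\tilde\theta)=Y(S^{2n+1},\theta_{S^{2n+1}})$, where the infimum on $\tilde M$ need not be attained: here the gap above vanishes precisely along sequences $\tilde v_j$ concentrating at a single point of a fibre (bubbling), which would force $Y(M,\theta)=Y(S^{2n+1},\theta_{S^{2n+1}})$ as well. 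This extremal equality is excluded by the rigidity of the CR Yamabe constant, since it would make $(M,\theta)$ CR equivalent to the sphere, which is simply connected and thus admits no nontrivial finite covering, contradicting the hypothesis.
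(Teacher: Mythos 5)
Your descent construction is correct, and it is genuinely different from the paper's argument: the paper takes a CR Yamabe \emph{minimizer} $u$ on $\tilde{M}$ solving \eqref{A1.1}, averages it over the deck group, $v=\sum_{\gamma\in\mathcal{G}}u\circ\gamma$ (which tacitly requires the covering to be normal, as the paper's proof indeed assumes), pushes $v$ down to $M$, and extracts strictness from H\"{o}lder's inequality together with $\left(\sum_{i}a_i^{p}\right)^{1/p}<\sum_{i}a_i$. Your fibrewise $\ell^p$-descent $w(x)=\big(\sum_{y\in\pi^{-1}(x)}\tilde{v}(y)^{p}\big)^{1/p}$ needs neither a minimizer nor normality to yield $Y(M,\theta)\leq Y(\tilde{M},\tilde{\theta})$, and your pointwise estimates check out: $w^{2}\leq\sum_i v_i^{2}$ (strict since $k\geq2$ and $v_i>0$), and $|\nabla_\theta w|^{2}\leq\sum_i|X_i|^{2}$ via Cauchy--Schwarz and superadditivity of $t\mapsto t^{r}$, $r=2-\frac{2}{p}\geq1$; the normalization $R_\theta>0$ by the principal eigenfunction of the conformal sub-Laplacian is also legitimate, since $Y(M,\theta)>0$ forces $\lambda_1>0$ and the subelliptic operator admits a positive first eigenfunction. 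When $Y(\tilde{M},\tilde{\theta})<Y(S^{2n+1},\theta_{S^{2n+1}})$, applying your strict per-function gap to the Jerison--Lee minimizer closes the argument exactly as you say.

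The genuine gap is your treatment of the borderline case $Y(\tilde{M},\tilde{\theta})=Y(S^{2n+1},\theta_{S^{2n+1}})$. You invoke ``rigidity of the CR Yamabe constant'': that $Y(M,\theta)=Y(S^{2n+1},\theta_{S^{2n+1}})$ forces $M$ to be CR equivalent to the sphere. In the Riemannian setting the analogous statement is the Aubin--Schoen theorem (resting on the positive mass theorem), but in the CR category it is \emph{not} available and is in fact false as stated: Jerison--Lee proved the strict inequality only for $n\geq2$ and $M$ not locally CR spherical, the remaining cases hinge on a CR positive mass theorem that holds in dimension three only under an additional positivity hypothesis, and the Rossi spheres (Cheng--Malchiodi--Yang) are compact strictly pseudoconvex $3$-manifolds with Sobolev quotient equal to $Y(S^{3},\theta_{S^{3}})$ that are not CR equivalent to the standard sphere. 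So your last step cannot be justified from Theorem \ref{thm1} or from known results. To be fair, the paper's own proof of the lemma silently presupposes a minimizer on $\tilde{M}$, i.e.\ implicitly $Y(\tilde{M},\tilde{\theta})<Y(S^{2n+1},\theta_{S^{2n+1}})$, and only confronts the equality case later, in the proof of Theorem \ref{Amain}, where it is dispatched by the chain $Y(M,\theta)<Y(M_2,\theta_2)\leq Y(S^{2n+1},\theta_{S^{2n+1}})=Y(M_\infty,\theta_\infty)$, i.e.\ by applying the lemma only in the strict-inequality regime; your proof is no weaker than the paper's up to that point, but the rigidity appeal is not a valid repair, and the equality case should either be assumed away or left as an explicit hypothesis.
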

\begin{proof}
For the contact form $\theta$, we consider its lift $\tilde{\theta}$.
Let $0<u\in C^\infty(\tilde{M})$ be a CR Yamabe minimizer with respect to $\tilde{\theta}$, i.e.
\begin{equation}\label{A1.1}
-(2+\frac{2}{n})\Delta_{\tilde{\theta}}u+R_{\tilde{\theta}} u=Y(\tilde{M},\tilde{\theta}) u^{1+\frac{2}{n}}~~\mbox{ on }\tilde{M}.
\end{equation}
Without loss of generality, we can assume
\begin{equation}\label{A1.0}
\int_{\tilde{M}}u^{2+\frac{2}{n}}dV_{\tilde{\theta}}=1.
\end{equation}
Let $\mathcal{G}$ be the deck transformation group for the normal covering $\tilde{M}\to M$.
Consider its average
\begin{equation}\label{A1.2}
v:=\sum_{\gamma\in\mathcal{G}}u\circ\gamma~~\mbox{ on }\tilde{M},
\end{equation}
and define $0<v_0\in C^\infty(M)$ to be the function whose lift  to $\tilde{M}$ is $v$.
Then
\begin{equation}\label{A1.3}
Y(M,\theta)\leq E_{(M,\theta)}(v_0)
=\frac{\int_M\left((2+\frac{2}{n})|\nabla_\theta v_0|^2+R_\theta v_0^2\right)dV_\theta}{\left(\int_Mv_0^{2+\frac{2}{n}}dV_\theta\right)^{\frac{n}{n+1}}}
\end{equation}
Since $(\tilde{M},\tilde{\theta})$ is a finite $k$-fold conformal covering of $(M,\theta)$,
we have
\begin{equation}\label{A1.4}
\int_Mv_0^{2+\frac{2}{n}}dV_\theta=\frac{1}{k}\int_{\tilde{M}}v^{2+\frac{2}{n}}dV_{\tilde{\theta}}
\end{equation}
and
\begin{equation}\label{A1.5}
\begin{split}
&\int_M\left((2+\frac{2}{n})|\nabla_\theta v_0|^2+R_\theta v_0^2\right)dV_\theta\\
&=\frac{1}{k}\int_{\tilde{M}}\left((2+\frac{2}{n})|\nabla_{\tilde{\theta}} v|^2+R_{\tilde{\theta}} v^2\right)dV_{\tilde{\theta}}\\
&=\frac{1}{k}\int_{\tilde{M}}\left(-(2+\frac{2}{n})\Delta_{\tilde{\theta}} v+R_{\tilde{\theta}} v\right)v\,dV_{\tilde{\theta}}\\
&=\frac{1}{k}\sum_{\gamma\in\mathcal{G}}\int_{\tilde{M}}\left(-(2+\frac{2}{n})\Delta_{\tilde{\theta}} (u\circ\gamma)+R_{\tilde{\theta}} (u\circ\gamma)\right)v\,dV_{\tilde{\theta}}\\
&=\frac{1}{k}Y(\tilde{M},\tilde{\theta})\int_{\tilde{M}}\sum_{\gamma\in\mathcal{G}}(u\circ\gamma)^{1+\frac{2}{n}}v\,dV_{\tilde{\theta}}
\end{split}
\end{equation}
where we have used (\ref{A1.1}) and (\ref{A1.2}). Substituting (\ref{A1.4}) and (\ref{A1.5})
into (\ref{A1.3}), we get
\begin{equation}\label{A1.6}
Y(M,\theta)\leq k^{-\frac{1}{n+1}}Y(\tilde{M},\tilde{\theta})
\frac{\int_{\tilde{M}}\sum_{\gamma\in\mathcal{G}}(u\circ\gamma)^{1+\frac{2}{n}}v\,dV_{\tilde{\theta}}}
{\left(\int_{\tilde{M}}v^{2+\frac{2}{n}}dV_{\tilde{\theta}}\right)^{\frac{n}{n+1}}}.
\end{equation}
By H\"{o}lder's inequality and
the inequality
$\left(\sum_{i=1}^ka_i^p\right)^{\frac{1}{p}}<\sum_{i=1}^k a_i$
for $k\geq 2$, $p>1$, $a_i>0$, we can compute
\begin{equation}\label{A1.7}
\begin{split}
\int_{\tilde{M}}\sum_{\gamma\in\mathcal{G}}(u\circ\gamma)^{1+\frac{2}{n}}v\,dV_{\tilde{\theta}}
&\leq\int_{\tilde{M}}\Big(\sum_{\gamma\in\mathcal{G}}(u\circ\gamma)^{2+\frac{2}{n}}\Big)^{\frac{1}{n+1}}
\Big(\sum_{\gamma\in\mathcal{G}}(u\circ\gamma)^{\frac{n+1}{n}}\Big)^{\frac{n}{n+1}}v\,dV_{\tilde{\theta}}\\
&<\int_{\tilde{M}}\Big(\sum_{\gamma\in\mathcal{G}}(u\circ\gamma)^{2+\frac{2}{n}}\Big)^{\frac{1}{n+1}}
\Big(\sum_{\gamma\in\mathcal{G}}u\circ\gamma\Big)v\,dV_{\tilde{\theta}}\\
&=\int_{\tilde{M}}\Big(\sum_{\gamma\in\mathcal{G}}(u\circ\gamma)^{2+\frac{2}{n}}\Big)^{\frac{1}{n+1}}
v^2dV_{\tilde{\theta}}\\
&\leq \left(\int_{\tilde{M}}v^{2+\frac{2}{n}}dV_{\tilde{\theta}}\right)^{\frac{n}{n+1}}
\Bigg(\int_{\tilde{M}}\sum_{\gamma\in\mathcal{G}}(u\circ\gamma)^{2+\frac{2}{n}}dV_{\tilde{\theta}}\Bigg)^{\frac{1}{n+1}}.
\end{split}
\end{equation}
It follows from  (\ref{A1.0}) that
\begin{equation}\label{A1.8}
\int_{\tilde{M}}\sum_{\gamma\in\mathcal{G}}(u\circ\gamma)^{2+\frac{2}{n}}dV_{\tilde{\theta}}
=k\int_{\tilde{M}}u^{2+\frac{2}{n}}dV_{\tilde{\theta}}=k.
\end{equation}
Now the assertion follows from combining (\ref{A1.6})-(\ref{A1.8}).
\end{proof}

For a noncompact CR manifold $(X,\theta)$,
the CR Yamabe constant of $(X,\theta)$ is defined by
\begin{equation*}
Y(X,\theta)=\inf\left\{E_{(X,\theta)}(u)~|~0<u\in C^\infty_c(X)\right\}.
\end{equation*}
Here, $C^\infty_c(X)$ is the space of all compactly supported smooth functions in $X$.

The argument of Jerison and Lee in Theorem \ref{thm1}
is still valid for any noncompact manifold:
\begin{equation}\label{A1.12}
Y(X,\theta)\leq Y(S^{2n+1},\theta_{S^{2n+1}}).
\end{equation}

We have the following definition:

Let $G$ be an infinite group and $H$ a subgroup of $G$ with infinite index.
Let $\{G_i\}_{i\geq 1}$ be an infinite sequence of subgroups of $G$.
We call $\{G_i\}_{i\geq 1}$ \textit{a descending chain of finite index subgroups tending to} $H$ if\\
(i) each $G_i$ is a finite index subgroup of $G$ with $G_i\supset H$;\\
(ii) $G=G_1\supsetneq G_2\supsetneq \cdots \supsetneq G_i\supsetneq G_{i+1}\supsetneq\cdots$; and\\
(iii) $\bigcap_{i=1}^\infty G_i=H$.

The following theorem corresponds to an analogue
of Lemma \ref{Alem2} for the CR Yamabe constants
of \textit{infinite} conformal coverings.
We identify each $\pi_1(M_k)$ and $\pi_1(M_\infty)$
with their projections to $\pi_1(M)$ in the following theorem.

\begin{theorem}\label{Amain}
Let $(M,\theta)$ be
a compact pseudoconvex CR manifold of dimension $2n+1$
with $Y(M,\theta)>0$. Let $(M_\infty,\theta_\infty)\to (M,\theta)$
be an infinite conformal covering such that
$\pi_1(M)$ has a descending chain of finite index subgroups tending to $\pi_1(M_\infty)$.
Then
\begin{equation}\label{A1.13}
Y(M,\theta)<Y(M_\infty,\theta_\infty).
\end{equation}
\end{theorem}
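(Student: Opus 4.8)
The plan is to derive \eqref{A1.13} from the CR Aubin Lemma (Lemma \ref{Alem2}), applied along the tower of finite covers attached to the chain $\{G_i\}$, together with a localization that transplants any compactly supported test function on $M_\infty$ to a finite cover without altering its energy.

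First I would set up the tower. Write $G=\pi_1(M)$ and $H=\pi_1(M_\infty)$, and for each $i$ let $(M_i,\theta_i)\to(M,\theta)$ be the finite conformal covering with $\pi_1(M_i)=G_i$ and $\theta_i$ the lift of $\theta$, so that $M_1=M$ and $M_\infty\to M$ factors as $M_\infty\xrightarrow{q_i}M_i\to M$ (since $H\subset G_i$). Because $G_{i+1}\subsetneq G_i$, each $M_{i+1}\to M_i$ is a nontrivial finite conformal covering, and every $M_i$ is compact and pseudoconvex. By induction $Y(M_i,\theta_i)>0$: the base case is the hypothesis $Y(M_1,\theta_1)=Y(M,\theta)>0$, while Lemma \ref{Alem2} applied to $M_{i+1}\to M_i$ gives $Y(M_i,\theta_i)<Y(M_{i+1},\theta_{i+1})$, so positivity propagates. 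In particular $\{Y(M_i,\theta_i)\}$ is strictly increasing, and Lemma \ref{Alem2} applied to $M_2\to M_1=M$ yields
\begin{equation*}
Y(M,\theta)<Y(M_2,\theta_2)\leq Y(M_i,\theta_i)\qquad\text{for all }i\geq 2.
\end{equation*}

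The key step is to prove $Y(M_\infty,\theta_\infty)\geq Y(M_2,\theta_2)$. Fix $0<u\in C^\infty_c(M_\infty)$ with compact support $K$; I claim $q_i$ is injective on $K$ for all large $i$. Passing to the universal cover $\widetilde M$, on which $G$ acts properly discontinuously, choose a compact $\widetilde K\subset\widetilde M$ mapping onto $K$ under $\widetilde M\to M_\infty=H\backslash\widetilde M$. The set $F=\{g\in G:\ g\widetilde K\cap\widetilde K\neq\emptyset\}$ is finite. If $x\neq y$ in $K$ had $q_i(x)=q_i(y)$, then lifting to $\widetilde K$ produces $g\in G_i$ with $g\notin H$ and $g\widetilde K\cap\widetilde K\neq\emptyset$, i.e. $g\in(G_i\cap F)\setminus H$; since $F$ is finite and $\bigcap_iG_i=H$, this set is empty for $i$ large, proving the claim. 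For such an $i\geq 2$ set $\bar u(z)=\sum_{w\in q_i^{-1}(z)}u(w)\in C^\infty(M_i)$, which is nonnegative and, by injectivity of $q_i|_K$, is the transplant of $u$ supported in $q_i(K)$. As $q_i^\ast\theta_i=\theta_\infty$, the map $q_i$ preserves all Tanaka--Webster invariants, so the integrands $|\nabla u|^2$, $R_\theta u^2$, $u^{2+2/n}$ and the volume form match under $q_i|_K$, and hence $E_{(M_\infty,\theta_\infty)}(u)=E_{(M_i,\theta_i)}(\bar u)$. Approximating the nonnegative competitor $\bar u$ by $\bar u+\varepsilon>0$ and letting $\varepsilon\to0$ gives $E_{(M_i,\theta_i)}(\bar u)\geq Y(M_i,\theta_i)\geq Y(M_2,\theta_2)$. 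Taking the infimum over all such $u$ yields $Y(M_\infty,\theta_\infty)\geq Y(M_2,\theta_2)>Y(M,\theta)$, which is \eqref{A1.13}.

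The main obstacle is this localization step: relating the genuinely noncompact invariant $Y(M_\infty,\theta_\infty)$ to the compact estimates. Everything hinges on the descending chain hypothesis $\bigcap_iG_i=H$, which, through proper discontinuity, forces $q_i$ to embed each fixed compact support once $i$ is large, thereby converting the noncompact quotient into an honest Aubin comparison on a finite cover. A secondary point to check is the normality required to run Lemma \ref{Alem2} on each rung $M_{i+1}\to M_i$; if the intermediate covers are not normal one simply replaces the deck-group average in that lemma by the fibre sum $\sum_{w\in q^{-1}(z)}u(w)$ used above.
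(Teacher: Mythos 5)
Your proposal is correct, and although it shares the paper's overall skeleton --- climb the tower of finite covers $M_i$ attached to the chain $\{G_i\}$, run Lemma \ref{Alem2} along the rungs to get $Y(M,\theta)<Y(M_2,\theta_2)\leq Y(M_i,\theta_i)$, and then compare $Y(M_\infty,\theta_\infty)$ with the tower by descending compactly supported functions from $M_\infty$ into sufficiently deep covers --- your implementation of the key comparison step is genuinely different and leaner. The paper realizes the descent geometrically: it constructs nested Dirichlet fundamental domains $\mathcal{D}_k$ exhausting $M_\infty$, interpolates smooth domains $\Omega_k$, solves a Dirichlet-type CR Yamabe problem on each $\overline{\Omega}_k$ (which forces the preliminary normalization $R_\theta\equiv Y(M,\theta)$, the Folland--Stein argument that $Y(M_\infty,\theta_\infty)>0$, an appeal to Jerison--Lee's existence theory on domains with boundary that is only sketched, and a separate case split when $Y(M_\infty,\theta_\infty)=Y(S^{2n+1},\theta_{S^{2n+1}})$), and pushes the minimizers $\psi_k$ down to $M_k$ to conclude $\limsup_k Y(M_k,\theta_k)\leq Y(M_\infty,\theta_\infty)$. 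You instead prove eventual injectivity of $q_i$ on any fixed compact $K\subset M_\infty$ by pure covering-space theory: proper discontinuity of the deck action makes $F=\{g\in G: g\widetilde{K}\cap\widetilde{K}\neq\emptyset\}$ finite, and $\bigcap_i G_i=H$ together with the descending property empties $(G_i\cap F)\setminus H$ for large $i$; the pushforward $\bar u$ of an arbitrary test function is then an isometric competitor on $M_i$ (the fibre sum is finite since at most one preimage of each point meets $K$, and the $\bar u+\varepsilon$ regularization converges in energy because $M_i$ is compact), giving $Y(M_\infty,\theta_\infty)\geq\lim_i Y(M_i,\theta_i)$ directly, with no minimizers, no normalization, no Folland--Stein step, and no sphere-constant dichotomy --- indeed your argument exposes that the minimizers $\psi_k$ are dispensable even in the paper's route, since any $\varphi\in C_c^\infty(\Omega_k)$ already descends to $M_k$. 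Your closing remark is also a genuine repair rather than a pedantic one: the subgroups $G_i$ need not be normal, so the rungs $M_{i+1}\to M_i$ need not be normal coverings, and the deck-group average in the paper's proof of Lemma \ref{Alem2} should indeed be replaced by the fibre sum $\sum_{y\in P^{-1}(x)}u(y)$, with which the computation, including the strict H\"{o}lder step, goes through verbatim. The only shared caveat is that, exactly as in the paper's chain \eqref{A1.17}, your single strict inequality rests on Lemma \ref{Alem2} at the bottom rung, whose proof presupposes a CR Yamabe minimizer on the cover (available by Theorem \ref{thm1}(ii) when its constant is below the sphere's); you inherit this reliance from the paper and nothing more.
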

\begin{proof}
Let $0<u\in C^\infty(M)$ be a CR Yamabe minimizer
with respect to $\theta$, i.e.
$E_{(M,\theta)}(u)=Y(M,\theta)$.
By replacing $\theta$ by
$$\tilde{\theta}=\frac{u^{\frac{2}{n}}\theta}{\left(\int_Mu^{2+\frac{2}{n}}dV_{\theta}\right)^{\frac{1}{n+1}}},$$
we may assume that
$$R_\theta=Y(M,\theta)~~\mbox{ and }~~\int_MdV_\theta=1.$$
Consider the lift $\theta_\infty$ of $\theta$ to $M_\infty$.
Note that
\begin{equation}\label{A1.14}
R_{\theta_\infty}=R_\theta=Y(M,\theta)>0.
\end{equation}
The Folland-Stein embedding
$$S_1^2(M_\infty,\theta_\infty)\hookrightarrow L^{2+\frac{2}{n}}(M_\infty,\theta_\infty)$$
combined with (\ref{A1.14}) implies that $Y(M_\infty,\theta_\infty)>0$.
To see this,
it follows from the Folland-Stein embedding that there exists a uniform constant $C_0>0$ depending only on $(M_\infty,\theta_\infty)$
such that
\begin{equation}\label{A1.15}
\left(\int_{M_\infty}\varphi^{2+\frac{2}{n}}dV_{\theta_\infty}\right)^{\frac{n}{n+1}}
\leq C_0\int_{M_\infty}\left(|\nabla_{\theta_\infty} \varphi|^2+ \varphi^2\right)dV_{\theta_\infty}.
\end{equation}
for any $\varphi\in C^\infty_c(M_\infty)$.
Thus, for any $\varphi\in C^\infty_c(M_\infty)$, we have
\begin{equation*}
\begin{split}
E_{(M_\infty,\theta_\infty)}(\varphi)
&=\frac{\int_{M_\infty}\left((2+\frac{2}{n})|\nabla_{\theta_\infty} \varphi|^2+ R_{\theta_\infty}\varphi^2\right)dV_{\theta_\infty}}
{\left(\int_{M_\infty}\varphi^{2+\frac{2}{n}}dV_{\theta_\infty}\right)^{\frac{n}{n+1}}}\\
&\geq \min\Big\{2+\frac{2}{n},Y(M,\theta)\Big\}\frac{\int_{M_\infty}\left(|\nabla_{\theta_\infty} \varphi|^2+\varphi^2\right)dV_{\theta_\infty}}
{\left(\int_{M_\infty}\varphi^{2+\frac{2}{n}}dV_{\theta_\infty}\right)^{\frac{n}{n+1}}}\\
&\geq C_0\min\Big\{2+\frac{2}{n},Y(M,\theta)\Big\}
\end{split}
\end{equation*}
by (\ref{A1.14}) and (\ref{A1.15}).
This implies that $Y(M_\infty,\theta_\infty)\geq C_0\displaystyle\min\Big\{2+\frac{2}{n},Y(M,\theta)\Big\}>0$,
as we claim.

For the covering $M_\infty\to M$, there exist base points $p_\infty\in M_\infty$, $p_1\in M$ and
the projection map $\mathcal{P}:(M_\infty,p_\infty)\to (M,p_1)$ such that
$\pi_1(M_\infty)=\pi_1(M_\infty,p_\infty)$ and $\pi_1(M)=\pi_1(M,p_1)$.
Since $\pi_1(M)$ has a descending chain of finite index subgroups tending to $\pi_1(M_\infty)$,
for each $k\geq 2$, there exists a finite covering
$P_k:(M_k,p_k)\to (M,p_1)$ satisfying
$\bigcap_{k=1}^\infty\pi_1(M_k)=\pi_1(M_\infty)$ ($M_1=M$ and $\pi_1(M_k)=\pi_1(M_k,p_k)$)
and the following:\\
(i) $M_\infty$ is an infinite covering $\mathcal{P}_k:(M_\infty,p_\infty)\to (M_k,p_k)$ of each $M_k$; \\
(ii) $M_{k+1}$ is a non-trivial finite covering $P_{k,k+1}:(M_{k+1},p_{k+1})\to (M_k,p_k)$ of each $M_k$ for $k\geq 1$.\\
Here we identify $\pi_1(M_\infty)$ and $\pi_1(M_k)$ with their projections to $\pi_1(M,p_1)$.
Denote the lifting of $\theta$ to $M_k$ by $\theta_k$. From Theorem \ref{thm1}(i) and Lemma \ref{Alem2}, we have
\begin{equation}\label{A1.17}
Y(M,\theta)<Y(M_2,\theta_2)<\cdots<Y(M_k,\theta_k)<Y(M_{k+1},\theta_{k+1})<\cdots\leq  Y(S^{2n+1},\theta_{S^{2n+1}}),
\end{equation}
and hence the limit of $\{Y(M_k,[\theta_k])\}_{k\geq1}$ always exists.
Therefore, by taking a suitable subsequence $\{Y(M_{k_j},[\theta_{k_j}])\}_{j\geq1}$
if necessary, it suffices to consider only the subsequence for the proof.
Throughout this paper, we always assume that
$(M_{k_1},\theta_{k_1})=(M,\theta)$ for such a subsequence $\{(M_{k_j},\theta_{k_j})\}_{j\geq1}$.

For each $k\geq 1$, we set
\begin{equation*}
\mathcal{D}_k=\Big\{x\in M_\infty \Big| d_{\theta_\infty}(x,p_\infty)<d_{\theta_\infty}(x,q_\infty)
~~\mbox{ for all }q_\infty\in\mathcal{P}_k^{-1}(p_k)-\{p_\infty\}\Big\}.
\end{equation*}
Here $d_{\theta_\infty}(\cdot,\cdot)$ is the Carnot-Carath\'{e}odory distance function with respect to $\theta_\infty$.
Since $M_k$ is compact and $\theta_\infty$ is the lifting of $\theta$  on $M=M_1$
to $M_\infty$, the infimum
$$\inf\Big\{d_{\theta_\infty}(q_\infty,\tilde{q}_\infty)\Big|q_\infty\neq\tilde{q}_\infty, ~~ q_\infty,\tilde{q}_\infty\in\mathcal{P}_k^{-1}(p_k)\Big\}$$
is strictly positive. Hence, there exists a finite subset $\mathcal{A}_k\subset \mathcal{P}_k^{-1}(p_k)-\{p_\infty\}$
for each $k\geq 1$ such that
\begin{equation*}
\mathcal{D}_k=\Big\{x\in M_\infty \Big| d_{\theta_\infty}(x,p_\infty)<d_{\theta_\infty}(x,q_\infty)
~~\mbox{ for all }q_\infty\in\mathcal{A}_k\Big\}.
\end{equation*}
Therefore, $\mathcal{D}_k$ is an open set, and especially a fundamental domain containing $p_\infty$
for the covering $M_\infty\to M_k$ such that
\begin{equation}\label{A1.16}
\mathcal{D}_1\subset \mathcal{D}_2\subset\cdots\subset
\mathcal{D}_k\subset \mathcal{D}_{k+1}\subset \cdots M_\infty,~~
\bigcup_{k\geq 1}\mathcal{D}_k=M_\infty.
\end{equation}
From (\ref{A1.16}), by taking a suitable subsequence if necessary, we can assume the following:\\
(iii) For all $k\geq 1$, the closure $\widehat{M}_k:=\overline{\mathcal{D}}_k \subset M_\infty$
satisfies
$d_{\theta_\infty}(\widehat{M}_k,\partial\widehat{M}_{k+1})\geq 1.$

By (iii), we can take a sequence $\{\Omega_k\}_{k\geq 2}$ of domains in $M_\infty$
with smooth boundary $\partial\overline{\Omega}_k$ satisfying
$$\widehat{M}_{k-1}\subset\Omega_k\subset\widehat{M}_{k}~~\mbox{ for }k\geq 2.$$
We will denote $\widehat{M}:=\widehat{M}_{1}$.

When $Y(M_\infty,\theta_\infty)= Y(S^{2n+1},\theta_{S^{2n+1}})$, it follows from (\ref{A1.17}),
Theorem \ref{thm1}(i) and Lemma \ref{Alem2} that
\begin{equation*}
Y(M,\theta)<Y(M_2,\theta_2)\leq  Y(S^{2n+1},\theta_{S^{2n+1}})=Y(M_\infty,\theta_\infty),
\end{equation*}
which proves (\ref{A1.13}). Hence, we may assume
$Y(M_\infty,\theta_\infty)<Y(S^{2n+1},\theta_{S^{2n+1}})$ by (\ref{A1.12}).

By definition of $Y(M_\infty,\theta_\infty)$ and the condition (iii), there exists $k_0\in\mathbb{N}$ such that
$$Q_k:=\inf_{\varphi\in C_c^\infty(\Omega_k)}E_{(M_\infty,\theta_\infty)}(\varphi)<Y(S^{2n+1},\theta_{S^{2n+1}})
~~\mbox{ for all }k\leq k_0$$
and that
$$Q_{k_0}>Q_{k_0+1}>\cdots>Q_{k_0+i}>Q_{k_0+i+1}>\cdots,
~~\lim_{k\to\infty} Q_k=Y(M_\infty,\theta_\infty).$$
By combining the inequality $Q_k<Y(S^{2n+1},\theta_{S^{2n+1}})$
with the same argument of Jerison and Lee in proving
Theorem \ref{thm1}(ii) for the compact manifolds,
the CR Yamabe problem of Dirichlet-type can be solved for each
$(\overline{\Omega}_k,\theta_\infty|_{\overline{\Omega}_k})$. Namely, there
exits $\psi_k\in C^\infty(\overline{\Omega}_k)$ such that
\begin{equation*}
E_{(M_\infty,\theta_\infty)}(\psi_k)=Q_k,~~
\psi_k>0\mbox{ in }\Omega_k~~\mbox{ and }~~\psi_k=0\mbox{ on }\partial\overline{\Omega}_k.
\end{equation*}

We denote the zero extension of $\psi_k$ to $M_\infty$ also by
$\psi_k\in C^{0,1}(M_\infty)\cap S_1^2(M_\infty,\theta_\infty)$.
Because $\psi_k|_{\partial\overline{\Omega}_k}=0$,
$\psi_k$ can also be regarded as a function on the compact manifold $M_k$.
Then
$$Y(M_k,\theta_k)\leq E_{(M_k,\theta_k)}(\psi_k)
=E_{(M_\infty,\theta_\infty)}(\psi_k)=Q_k,$$
and hence
$$\limsup_{k\to\infty}Y(M_k,\theta_k)\leq\lim_{k\to\infty}Q_k=Y(M_\infty,\theta_\infty).$$
From Lemma \ref{Alem2}, we obtain
$$Y(M,\theta)<Y(M_2,\theta_2)\leq
\lim_{k\to\infty}Y(M_k,\theta_k)=\limsup_{k\to\infty}Y(M_k,\theta_k)
\leq Y(M_\infty,\theta_\infty).$$
This completes the proof of (\ref{A1.13}).
\end{proof}

\section{The CR Yamabe flow on complete noncompact manifolds}\label{section3}

Let $(M,\theta)$ be a smooth, strictly pseudoconvex $(2n+1)$-dimensional complete noncompact CR manifold.
If we write $\th(t)=u^{\frac{2}{n}}\th$
for some $u=u(t)$, then it follows from (\ref{CR_Yamabe_eqn}) that
the CR Yamabe flow (\ref{CRYF}) reduces to
\begin{equation}\label{CRYF1}
\frac{\partial u}{\partial t}=(n+1)u^{-\frac{2}{n}}\sl u-\frac{n}{2}R_{\th} u^{1-\frac{2}{n}},
\end{equation}
from which we obtain
\begin{eqnarray}\label{CRYF2}
\frac{\partial}{\partial t}\left(u^{\frac{n+2}{n}}\right)=\frac{(n+1)(n+2)}{n}\left(\sl u- \frac{n}{2n+2}R_{\th} u\right).
\end{eqnarray}
Now we will establish two maximum principles which hold on complete noncompact CR manifolds. By using the  maximum principles, we prove a uniqueness theorem for the CR Yamabe flow (\ref{CRYF}). To do this, we follow the idea of Ma and An in \cite{maan}.
First we give two definitions.
\begin{definition}\label{A}
We say that a complete noncompact CR manifold $(M,\theta)$ satisfies \textit{Condition A}, if there exist a constant $A$ and a smooth positive function $f$ such that $f\rightarrow \infty$ near infinity and
\begin{equation*}
|\nabla_\theta f|\leq A \hspace{2mm}\mbox{ and} \hspace{2mm} \sl f\leq A~~\mbox{ on }M.
\end{equation*}
\end{definition}

\begin{definition}\label{B}
Let $t\in [0,T]$, we say $(M,\theta(t))$ satisfy \textit{Condition B}, if there exist a positive constant $B$ and a smooth positive function $g$ such that
\begin{equation*}
-\frac{\partial}{\partial t}g+\sl g\leq B,  \hspace{2mm}\mbox{on} \hspace{2mm} M\times [0,T],
\end{equation*}
and for any constant $C>0$, $x_0\in M$, there is a positive constant $d_0$ satisfying $g(x,t)>C$ for $d_{\theta(t)}(x_0,x)>d_0$ for all $t\in [0,T]$.
\end{definition}

\begin{example}
We claim that the Heisenberg group $\mathbb H ^n$ satisfies \textit{Condition A} and \textit{Condition B} above.
Recall that the Heisenberg group $\mathbb H ^n$ is a Lie group whose underlying manifold is $\mathbb C^n\times \mathbb R$ with coordinates $(z,t)=(z^1,z^2,\cdots,z^n,t)$. The contact form of $\mathbb H ^n$ is
$$\th_0=dt+\ii\sum_{\alpha=1}^n (z^\alpha d\bar{z}^\alpha-\bar{z}^\alpha dz^\alpha).$$
Then the dual frame $\{Z_\alpha\}$ for $T^{1,0}$ defined in section \ref{section1}
are given by
$$Z_\alpha=\frac{1}{\sqrt{2}}\left(\frac{\partial}{\partial z^\alpha}+\sqrt{-1}\bar{z}^\alpha\frac{\partial}{\partial t}\right)~~\mbox{ for }1\leq \alpha\leq n.$$
And $Z_{\bar{\alpha}}=\overline{Z}_{\alpha}$.
If we define
$f(z,t)=(|z|^4+t^2)^{p}$
for $(z,t)\in\mathbb H ^n$, where $0<p\leq 1/4$, then
$f \rightarrow \infty$ near infinity, and
\begin{equation}
\begin{split}
Z_\alpha f&=\frac{p(|z|^2+\sqrt{-1} t)\bar{z}_\alpha}{\sqrt{2}(|z|^4+t^2)^{1-p}},\\
Z_{\bar{\alpha}} Z_\alpha f&=\frac{p(|z|^2+\sqrt{-1} t)}{2(|z|^4+t^2)^{1-p}}+
\frac{p|z_\alpha|^2}{(|z|^4+t^2)^{1-p}}
-\frac{p(1-p)|z_\alpha|^2}{2(|z|^4+t^2)^{1-p}}
\end{split}
\end{equation}
for all $1\leq \alpha\leq n$. This implies that
$$|\nabla_{\theta_0} f|=\sum_{\alpha=1}^nZ_\alpha fZ_{\bar{\alpha}} f\leq A~~\mbox{ and }~~
\Delta_{\theta_0}f=\sum_{\alpha=1}^n(Z_{\bar{\alpha}} Z_\alpha+Z_\alpha Z_{\bar{\alpha}})f
\leq A
$$
for some uniform constant $A$, since $0<p\leq 1/4$.
Thus the Heisenberg group $\mathbb H ^n$ is an example satisfies \textit{Condition A}, and also satisfies \textit{Condition B}  by fixing the contact form $\theta(t)=\theta$
and the function $g=f$.
\end{example}

Now we prove the following weak maximum principle:
\begin{theorem}
Let $(M,\theta)$ be a complete noncompact CR manifold satisfying \textit{Condition A}. Assume $u\in C^2(M\times [0,T])$ satisfies the following evolution equation:
\begin{equation*}
\begin{split}
\frac{\partial}{\partial t}u&=\sl u +F(Du,u,x,t),\hspace{2mm}\mbox{ on} \hspace{2mm} M\times [0,T],\\
|u|&\leq C,\hspace{2mm}\mbox{ on} \hspace{2mm} M\times [0,T],\\
u(x,0)&\leq 0,\hspace{2mm}\mbox{ on} \hspace{2mm} M,\\
F(Du,u,x,t)&\leq C u(x,t),\hspace{2mm}\mbox{for} \hspace{2mm} u(x,t)>0.
\end{split}
\end{equation*}
Here $C>0$ is a fixed constant. Then we have $u\leq 0$ on $M\times [0,T]$.
\end{theorem}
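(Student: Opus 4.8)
The plan is to combine an exponential rescaling in time with a barrier built from the function $f$ supplied by \emph{Condition A}, following the auxiliary-function technique of Ma and An. First I would remove the zeroth-order term by setting $h=e^{-Kt}u$ for a constant $K>C$. A direct computation gives $\partial_t h=\sl h+e^{-Kt}\bigl(F(Du,u,x,t)-Ku\bigr)$, and on the set where $u>0$ (equivalently $h>0$) the structural hypothesis $F\le Cu$ yields $F-Ku\le(C-K)u<0$. Hence $h$ is a subsolution in the sense that $\partial_t h\le\sl h$ wherever $h>0$; moreover $|h|\le C$ and $h(\cdot,0)\le0$ are inherited directly from $u$ since $e^{-Kt}\le1$.

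The core of the argument is a perturbation that forces the extremum to be attained at a finite point. For $\epsilon>0$ and a constant $\alpha>A$ to be fixed, I would consider
$$\tilde h:=h-\epsilon\,(f+\alpha t+1)\qquad\text{on }M\times[0,T].$$
Since $h$ is bounded and $f\to\infty$ near infinity, $\tilde h\to-\infty$ uniformly in $t\in[0,T]$ as $x\to\infty$; thus the supremum of $\tilde h$ over $M\times[0,T]$ is attained at some point $(x_0,t_0)$ lying in a compact set. I would then argue by contradiction, assuming $\sup\tilde h>0$. At $t=0$ one has $\tilde h\le-\epsilon(f+1)<0$, so necessarily $t_0>0$; and $\tilde h(x_0,t_0)>0$ forces $h(x_0,t_0)>0$, i.e. $u(x_0,t_0)>0$, so the subsolution inequality is available at $(x_0,t_0)$.

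Because $M$ has no boundary, $x_0$ is an interior spatial maximum, so $\nabla_\theta\tilde h(x_0,t_0)=0$ and, since the first-order connection terms of the sub-Laplacian vanish at a critical point, $\sl\tilde h(x_0,t_0)\le0$; moreover $t_0>0$ gives $\partial_t\tilde h(x_0,t_0)\ge0$. Using $\sl h=\sl\tilde h+\epsilon\,\sl f\le\epsilon A$ together with the subsolution inequality $\partial_t h\le\sl h$ and the identity $\partial_t\tilde h=\partial_t h-\epsilon\alpha$, I obtain
$$0\le\partial_t\tilde h(x_0,t_0)\le\epsilon A-\epsilon\alpha=\epsilon(A-\alpha)<0,$$
a contradiction once $\alpha>A$. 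Hence $\tilde h\le0$ for every $\epsilon>0$, so $h\le\epsilon(f+\alpha t+1)$ pointwise, and letting $\epsilon\to0$ gives $h\le0$, that is, $u=e^{Kt}h\le0$ on $M\times[0,T]$.

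The step I expect to be the main obstacle is the noncompact part: ensuring the supremum of $\tilde h$ is genuinely attained and making the sub-Laplacian maximum-point inequality $\sl\tilde h\le0$ rigorous in the sub-Riemannian setting. The bound $\sl f\le A$ from \emph{Condition A} is precisely what controls $\sl\tilde h$ from above once the gradient term is annihilated, while $f\to\infty$ provides the coercivity confining the extremum to a compact region, and the linear-in-time term $\alpha t$ with $\alpha>A$ supplies the strict sign needed to close the contradiction. I note that the gradient bound $|\nabla_\theta f|\le A$ in \emph{Condition A} is not required for this weak maximum principle, and is presumably reserved for the companion principle under \emph{Condition B}.
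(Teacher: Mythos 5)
Your proof is correct, but it takes a genuinely different route from the paper's. The paper proves this theorem with a \emph{multiplicative} barrier: it sets $\psi_\lambda(x,t)=e^{\lambda t}(f(x)+At+1)$ with $\lambda>C$, shows $\partial_t\psi_\lambda\geq\lambda\psi_\lambda+\Delta_\theta\psi_\lambda$ using $\Delta_\theta f\leq A$, passes to $\varphi_\lambda=1/\psi_\lambda$ (which decays at infinity), and studies $v=\varphi_\lambda u$: since $|u|\leq C$ and $\varphi_\lambda\to 0$, a positive supremum of $v$ is attained at an interior point $(x_1,t_1)$, where $\nabla_\theta v=0$ kills the cross term and the differential inequality for $\varphi_\lambda$ forces $F(x_1,t_1)\geq\lambda u(x_1,t_1)>Cu(x_1,t_1)$, contradicting the structural hypothesis directly. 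You instead neutralize $F$ \emph{first} via the exponential substitution $h=e^{-Kt}u$, $K>C$, reducing to a pure subsolution inequality $\partial_t h\leq\Delta_\theta h$ on $\{u>0\}$, and then run an \emph{additive} barrier $\tilde h=h-\epsilon(f+\alpha t+1)$, $\alpha>A$ --- which is in fact the technique the paper reserves for its companion maximum principle under \emph{Condition B} (there $w=u-\epsilon(g+C_1t)$). Your steps all check out: the sup of $\tilde h$, if positive, is attained in a compact set because $f\to\infty$ and $h$ is bounded; $t_0>0$ since $\tilde h(\cdot,0)\leq-\epsilon<0$; $\tilde h(x_0,t_0)>0$ forces $u(x_0,t_0)>0$ so the subsolution inequality is legitimately available there; and the chain $0\leq\partial_t\tilde h\leq\Delta_\theta\tilde h+\epsilon\Delta_\theta f-\epsilon\alpha\leq\epsilon(A-\alpha)<0$ closes the contradiction. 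What your version buys is twofold: the contradiction becomes a one-line sign computation rather than an estimate on $F$ at the max point, and it makes transparent your (accurate) observation that the gradient bound $|\nabla_\theta f|\leq A$ in \emph{Condition A} is never needed --- inspection shows the paper's own proof also uses only $\Delta_\theta f\leq A$ and $f\to\infty$, since the term involving $|\nabla_\theta\varphi_\lambda|^2$ enters with a favorable sign and $\nabla_\theta v$ vanishes at the maximum. What the paper's multiplicative formulation buys is uniformity in how the decay of $\varphi_\lambda$ localizes the maximum without invoking the $\epsilon\to 0$ limit; both arguments rest on the same two pillars, coercivity of $f$ and the one-sided bound on $\Delta_\theta f$, together with the standard fact (which you correctly justify) that $\Delta_\theta\tilde h\leq 0$ at an interior spatial maximum because the sub-Laplacian is a sum of second derivatives along horizontal fields once first-order terms vanish at a critical point.
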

\begin{proof}
We argue by contradiction. Assume that there is a point $(x_0,t_0)\in M\times [0,T]$ such that
$$u(x_0,t_0)>0.$$
We denote $F(x,t)=F(Du,u,x,t)$. Let $\psi_\lambda (x,t)=e^{\lambda t}(f(x)+At+1)$, where
$\lambda$ is a constant chosen
to satisfy $\lambda> C$. Then we have
\begin{equation*}
\begin{split}
\frac{\partial \psi_\lambda}{\partial t}&=\lambda \psi_\lambda +e^{\lambda t}A\\
&\geq \lambda\psi_\lambda+\sl \psi_\lambda.
\end{split}
\end{equation*}
Let $\varphi_\lambda =\frac{1}{\psi_\lambda}$, i.e. $0\leq \varphi_\lambda \leq 1$.
From which we get
\begin{equation*}
\begin{split}
\frac{\partial \varphi_\lambda}{\partial t}=-\psi_\lambda^{-2}\frac{\partial \psi_\lambda}{\partial t}&\leq -\psi_\lambda^{-2}(\lambda \psi_\lambda +\sl \psi_\lambda)\\
&=-\lambda \varphi_\lambda-\psi_\lambda^{-2}\sl \psi_\lambda.
\end{split}
\end{equation*}
And
\begin{equation*}
\begin{split}
\sl \varphi_\lambda&=-\psi_\lambda^{-2}\sl \psi_\lambda +2\psi_\lambda^{-3}|\nabla \psi_\lambda|^2\\
&=-\psi_\lambda^{-2}\sl \psi_\lambda+2\varphi_\lambda^{-1}\ff.
\end{split}
\end{equation*}
From the above, we can deduce that
\begin{equation*}
-\psi_\lambda^{-2}\sl \psi_\lambda=\sl \varphi_\lambda-2\varphi_\lambda^{-1}\ff.
\end{equation*}
Then we have
\begin{equation}\label{2.1}
\frac{\partial \varphi_\lambda}{\partial t}\leq \sl \varphi_\lambda-2\varphi_\lambda^{-1}\ff-\lambda \varphi_\lambda\hspace{2mm} \mbox{on} \hspace{2mm} M.
\end{equation}
Set $v=\varphi_\lambda u$. Then we have $v(x_0,t_0)=\varphi_\lambda(x_0,t_0)u(x_0,t_0)>0$. Since $|u|\leq C$ and $0<\varphi_\lambda\leq 1$, we can assume
$$a=\sup_{M\times [0,T]} v>0.$$
We denote
$$D=\left\{x\in M: d_{\th}(x_0,x)\leq \frac{C_1}{a}\right\}.$$
Here $C_1$ is a positive constant. Since $\varphi_\lambda$ tends to $0$ near infinity, we can choose $C_1$ large enough such that
\begin{equation*}
v(x,t)<a,\hspace{2mm} \mbox{for} \hspace{2mm} (x,t)\notin D\times [0,T].
\end{equation*}
Since $D\times [0,t]$ is compact, we can find an interior point $(x_1,t_1)\in D\times [0,T]$ such that
$$\sup_{M\times [0,T]}v(x,t)=v(x_1,t_1)=a.$$
At the point $(x_1,t_1)$, we have
$$\frac{\partial v}{\partial t}-\sl v\geq 0,$$
and
$$\nabla_{\theta}v=0.$$
Then by \eqref{2.1}, we have
\begin{equation*}
\begin{split}
\frac{\partial v}{\partial t}-\sl v&=-2\varphi_\lambda^{-1}\langle\ \nabla_{\theta} \varphi_\lambda, \nabla_{\theta} v\rangle+\varphi_\lambda F(x,t)+
(\frac{\partial \varphi_\lambda}{\partial t}-\sl \varphi_\lambda +\frac{2}{\varphi_\lambda}\ff)u\\
&\leq -2 \varphi_\lambda^{-1}\langle\ \nabla_{\theta} \varphi_\lambda, \nabla_{\theta} v\rangle+\varphi_\lambda F(x,t)-\lambda\varphi_\lambda u.
\end{split}
\end{equation*}
Now at the point $(x_1,t_1)$, we obtain
$$0\leq \varphi_\lambda F-\lambda \varphi_\lambda u.$$
Thus
$$F(x_1,t_1)\geq \lambda u(x_1,t_1)>Cu(x_1,t_1),$$
which is absurd by our assumption.
\end{proof}
Now we follow the idea of Ma and An \cite{maan} to prove the next maximum principle.
\begin{theorem}\label{2.7}
Let $(M,\theta)$ be a complete noncompact CR manifold. And $(M,\theta(t))$ satisfies \textit{Condition B} on $M\times [0,T]$. Assume $u\in C^2(M\times [0,T])$ satisfies the following evolution equation:
\begin{equation*}
\begin{split}
\frac{\partial}{\partial t}u-\sl u&\leq 0,\hspace{2mm}\mbox{ on} \hspace{2mm} M\times [0,T],\\
u(0)=u_0&\leq C,\hspace{2mm}\mbox{ on} \hspace{2mm} M.
\end{split}
\end{equation*}
Then $u\leq C$ on $M\times [0,T]$.
\end{theorem}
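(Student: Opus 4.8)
The plan is to run a barrier argument built from the function $g$ supplied by \textit{Condition B}, in the spirit of Ma and An \cite{maan}. Fix $\epsilon>0$ and set
$$v_\epsilon = u - C - \epsilon\big(g + (B+1)t\big).$$
First I would apply the parabolic operator to $v_\epsilon$. Using the hypothesis $\frac{\partial}{\partial t}u - \sl u \le 0$ together with the first half of \textit{Condition B}, namely $-\frac{\partial}{\partial t}g + \sl g \le B$ (equivalently $\frac{\partial}{\partial t}g - \sl g \ge -B$), I obtain
$$\frac{\partial}{\partial t}v_\epsilon - \sl v_\epsilon \le 0 - \epsilon\Big(\frac{\partial}{\partial t}g - \sl g\Big) - \epsilon(B+1) \le \epsilon B - \epsilon(B+1) = -\epsilon < 0.$$
The reason for the factor $(B+1)$ rather than $B$ is precisely to make this inequality \emph{strict}, which is what will force the contradiction below.

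Next I would record the sign of $v_\epsilon$ on the parabolic boundary and at infinity. At $t=0$, since $u_0\le C$ and $g>0$, we have $v_\epsilon(\cdot,0) = u_0 - C - \epsilon g(\cdot,0) < 0$. Suppose, for contradiction, that $v_\epsilon$ is positive somewhere; I claim its supremum over $M\times[0,T]$ is then a positive maximum attained at an interior point. This is exactly where the second half of \textit{Condition B} enters: because $g(x,t)>C'$ whenever $d_{\theta(t)}(x_0,x)>d_0$, uniformly in $t$, the set $\{v_\epsilon\ge 0\}$ would force $u>C+\epsilon C'$, so $g$ stays bounded on it; hence (with $u$ controlled above) that set lies in a fixed Carnot--Carath\'eodory ball and $v_\epsilon\to-\infty$ as $d_{\theta(t)}(x_0,x)\to\infty$. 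The continuous function $v_\epsilon$ therefore attains a positive maximum at some $(x_1,t_1)$, and $t_1>0$ because $v_\epsilon(\cdot,0)<0$.

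At $(x_1,t_1)$ the usual calculus on the manifold gives $\frac{\partial}{\partial t}v_\epsilon\ge 0$ (with equality when $t_1<T$), $\nabla_\theta v_\epsilon = 0$, and $\sl v_\epsilon\le 0$, the last because the subLaplacian is a positive trace of the horizontal Hessian, which is negative semidefinite at a spatial maximum. Thus $\frac{\partial}{\partial t}v_\epsilon - \sl v_\epsilon\ge 0$ there, contradicting the strict inequality $\le -\epsilon<0$ established above. Hence $v_\epsilon\le 0$ on $M\times[0,T]$, i.e. $u\le C+\epsilon\big(g+(B+1)t\big)$, and letting $\epsilon\to0$ with $(x,t)$ fixed yields $u\le C$.

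I expect the main obstacle to be the localization step, not the computation: the differential-inequality manipulation and the interior-maximum argument are routine, but on a noncompact $M$ one must guarantee that the positive supremum of $v_\epsilon$ is genuinely attained. The growth property in \textit{Condition B} is designed to defeat this noncompactness, yet it does so only once $u$ is controlled at infinity (here, bounded above); a Tychonoff-type nonuniqueness example for the heat equation shows the conclusion can fail without such control. I would therefore either make this boundedness explicit or import it from the context (the uniqueness application) in which the theorem is used.
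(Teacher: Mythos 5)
Your proposal is correct and is essentially the paper's own argument: the same barrier $u-\epsilon\left(g+C_1t\right)$ (you take $C_1=B+1$ where the paper takes any $C_1\ge B$), the same localization of the supremum via the growth of $g$ in \emph{Condition B}, the same maximum-principle computation at an attained maximum, and the same limit $\epsilon\to 0$, with your strict inequality $\frac{\partial}{\partial t}v_\epsilon-\Delta_{\theta}v_\epsilon\le-\epsilon<0$ merely replacing the paper's Hamilton-type step $\frac{\partial}{\partial t}\big(\sup_{M}w(t)\big)\le 0$ and its iteration in time by a direct contradiction. Your closing caveat is also accurate: an upper bound on $u$ is genuinely needed for the localization step, and the paper's proof uses it implicitly through its restriction to the time interval on which $\sup u\le 2C$ (the bound is available in the intended application, Theorem \ref{2.6}, where $0<c\le u\le C$).
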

\begin{proof}
Without loss of generality, we can assume $C>0$.  And let $s>0$ be the largest time such that
$$C\leq \sup_{M\times [0,T]} u\leq 2C.$$
We define $w$ by
$$w=u-\epsilon(g+C_1 t),$$
where $C_1\geq B$ is a fixed constant, and $\epsilon>0$. We choose $A$ such that
$$\epsilon A>C.$$
Then we have
\begin{equation*}
\begin{split}
\frac{\partial}{\partial t}w-\sl w&=\frac{\partial}{\partial t}u-\sl u-\epsilon(\frac{\partial}{\partial t}g-\sl g+C_1)\\
&\leq \frac{\partial}{\partial t}u-\sl u+\epsilon(B-C_1)\leq 0.
\end{split}
\end{equation*}
Since $w\leq 2C-\epsilon A<C$ for $d_{\th}(x_0,x)>d_0$. Hence, $w$ can attain its supremum at a point $x_1\in M$. Then at point $(x_1,t)$, we have
 $$\sl w\leq 0.$$
From which we get
$$\frac{\partial}{\partial t}\Big(\sup_{M}w(t)\Big)\leq 0.$$
Therefore,
\begin{equation}\label{2.2}
u-\epsilon(g+C_1 t)\leq \sup_{M}w(t)\leq \sup_{M}w(0)\leq C.
\end{equation}
Let $\epsilon$ tends to $0$ in \eqref{2.2}, we obtain for any $t\leq s$,
$$u(x,t)\leq C.$$
Then by iteration, we get the conclusion that $u(x,t)\leq C$ on $M\times [0,T]$.
\end{proof}

Next we prove the following uniqueness theorem of the CR Yamabe flow \eqref{CRYF} on complete noncompact CR manifolds.

\begin{theorem}\label{2.6}
Let $(M,\theta)$ be a complete noncompact CR manifold. Suppose that $u$ is the solution of the CR Yamabe flow \eqref{CRYF2}
such that
$(M,\theta(t)=u^{\frac{2}{n}}\theta)$ satisfies \textit{Condition B} on $M\times [0,T]$.
If $u\in C^2(M\times [0,T])$ and $0<c\leq u \leq C$ on  $M\times [0,T]$,
then  $u$ is the unique solution of the CR Yamabe flow \eqref{CRYF2}.
\end{theorem}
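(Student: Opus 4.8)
The plan is to prove uniqueness by a comparison argument: I would assume there are two solutions $u_1,u_2$ of \eqref{CRYF2} on $M\times[0,T]$ with the same initial data $u_1(\cdot,0)=u_2(\cdot,0)$, each satisfying $0<c\le u_i\le C$ and the hypothesis that the associated flow satisfies \textit{Condition B}, and then show that the difference $w:=u_1-u_2$, which vanishes at $t=0$, must vanish identically. The strategy is to derive a \emph{linear} parabolic equation for $w$ with bounded coefficients and then invoke the maximum principle of Theorem \ref{2.7}.

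First I would recast \eqref{CRYF2} in the pointwise form \eqref{CRYF1}, namely $\partial_t u_i=(n+1)u_i^{-2/n}\sl u_i-\frac{n}{2}R_\theta u_i^{1-2/n}$, and subtract the two equations. Using the splitting $u_1^{-2/n}\sl u_1-u_2^{-2/n}\sl u_2=u_1^{-2/n}\sl w+(u_1^{-2/n}-u_2^{-2/n})\sl u_2$ together with the mean value theorem applied to $s\mapsto s^{-2/n}$ and $s\mapsto s^{1-2/n}$ (which turns the remaining differences into $w$ times a factor controlled by the bounds $c\le u_i\le C$), one obtains a linear equation
\begin{equation*}
\partial_t w=a(x,t)\,\sl w+P(x,t)\,w,
\end{equation*}
where $a(x,t)=(n+1)u_1^{-2/n}$ is bounded between two positive constants, and $P(x,t)$ is bounded on $M\times[0,T]$; the boundedness of $P$ uses the two-sided bound on $u_i$ together with the $C^2$-control that makes $\sl u_2$ and $R_\theta$ bounded.

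Next I would remove the zeroth-order term by an exponential gauge: choosing $K>\sup|P|$ and setting $\bar w:=e^{-Kt}w$, the function $\bar w$ satisfies $\partial_t\bar w=a\,\sl\bar w+(P-K)\bar w$ with $P-K<0$ and $\bar w(\cdot,0)=0$. At an interior spatial maximum where $\bar w>0$ one has $\sl\bar w\le 0$, and since $a>0$ and $P-K<0$ the quantity $a\,\sl\bar w+(P-K)\bar w$ is nonpositive there; the barrier $\bar w-\epsilon(g+C_1t)$ from the proof of Theorem \ref{2.7}, together with \textit{Condition B} to localize the supremum to a compact set, then forces $\bar w\le 0$, hence $w\le 0$. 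Interchanging the roles of $u_1$ and $u_2$ gives $w\ge 0$, so $w\equiv 0$ and $u_1\equiv u_2$.

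The hard part will be the final comparison step, since Theorem \ref{2.7} is stated for the pure operator $\partial_t-\sl$ with unit diffusion coefficient and no zeroth-order term, whereas our difference equation carries the variable positive coefficient $a(x,t)$ and the coefficient $P$. I expect the barrier argument to go through essentially unchanged, because the positive factor $a$ does not affect the sign of $a\,\sl\bar w$ at an interior maximum and the nonpositive zeroth-order term only helps; the delicate point is to confirm that all coefficients stay uniformly bounded on the noncompact manifold so that the exponential gauge and the barrier estimate close, which is precisely where the uniform bounds $0<c\le u\le C$ and \textit{Condition B} enter.
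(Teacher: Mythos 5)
Your proposal is correct and follows essentially the same route as the paper: subtract the two solutions, use the mean value theorem (the paper writes it as $u^{2/n}-v^{2/n}=\int_0^1\frac{2}{n}(zu+(1-z)v)^{\frac{2-n}{n}}(u-v)\,dz$ before dividing by $u^{2/n}$, rather than applying it after dividing as you do) to obtain a linear equation $\partial_t w=a\,\Delta_\theta w-bw$ with $a=(n+1)u^{-2/n}>0$ and $b$ bounded, pass to $\bar w=e^{-\lambda t}w$ with $\lambda$ large to make the zeroth-order coefficient of favorable sign, and invoke Theorem \ref{2.7} via \textit{Condition B}. Your closing remark about extending Theorem \ref{2.7} to the variable coefficient $a$ and the zeroth-order term is exactly the (implicit) step in the paper's own application, and your symmetric conclusion $w\le 0$, $w\ge 0$ is a cosmetic variant of the paper's contradiction argument.
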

\begin{proof}
The CR Yamabe flow \eqref{CRYF2} is given by
\begin{equation}\label{2.3}
u^{\frac{2}{n}} \frac{\partial u}{\partial t}=(n+1)\left(\sl u-\frac{n}{2(n+1)}R_{\th}u\right),
~~u(0)=u_0>0.
\end{equation}
Now we argue by contradiction. Suppose that there exists two positive solutions $u$,$v$ of this initial value problem \eqref{2.3}.
We define
$$w=u-v.$$
Then, in $M\times [0,T]$, we have
\begin{equation}\label{2.4}
\begin{split}
(n+1)\left(\sl w-\frac{n}{2(n+1)}R_{\th} w\right)&=u^{\frac{2}{n}}\frac{\partial u}{\partial t}-v^{\frac{2}{n}}\frac{\partial v}{\partial t}\\
&=u^{\frac{2}{n}}\frac{\partial w}{\partial t}+\left(u^{\frac{2}{n}}-v^{\frac{2}{n}}\right)\frac{\partial v}{\partial t}\\
&=u^{\frac{2}{n}}\frac{\partial w}{\partial t}+
\left(\int_0^1 \frac{d}{dz}(zu+(1-z)v)^{\frac{2}{n}}dz\right)\frac{\partial v}{\partial t}\\
&=u^{\frac{2}{n}}\frac{\partial w}{\partial t}+
\left(\int_0^1 \frac{2}{n}(zu+(1-z)v)^{\frac{2-n}{n}}(u-v)dz\right)\frac{\partial v}{\partial t}\\
&=u^{\frac{2}{n}}\frac{\partial w}{\partial t}+\left(\int_0^1 (zu+(1-z)v)^{\frac{2-n}{n}}dz\right)
\frac{2}{n}w\frac{\partial v}{\partial t}.
\end{split}
\end{equation}
It follows from \eqref{2.4} that
\begin{equation}\label{2.5}
\begin{split}
\frac{\partial w}{\partial t}&=\frac{n+1}{u^{\frac{2}{n}}}\Delta_{\th}w-\frac{n}{2}R_0wu^{-\frac{2}{n}}\\
&-\frac{2}{n}u^{-\frac{2}{n}}\frac{\partial v}{\partial t}w\int_0^1(zu+(1-z)v)^{\frac{2-n}{n}}dz\\
&=\frac{n+1}{u^{\frac{2}{n}}}\Delta_{\th}w-\left(\frac{2}{nu^{\frac{2}{n}}}\frac{\partial v}{\partial t}\int_0^1(zu+(1-z)v)^{\frac{2-n}{n}}dz+\frac{2}{n}\frac{R_{\th}}{u^{\frac{2}{n}}}\right)w\\
&=a\Delta_{\th}w-bw,\hspace{2mm}\mbox{ on} \hspace{2mm} M\times [0,T],\\
w(0)&\equiv 0.
\end{split}
\end{equation}
Here
$$a=\frac{n+1}{u^{\frac{2}{n}}}>0~~\mbox{ and }~~b=\frac{2}{nu^{\frac{2}{n}}}\frac {\partial v}{\partial t}\int_0^1(zu+(1-z)v)^{\frac{2-n}{n}}dz+\frac{2}{n}\frac{R_{\th}}{u^{\frac{2}{n}}}$$
are continuous functions. Since $u$ and $v$ are solutions of the CR Yamabe flow \eqref{CRYF2},
it follows from the assumptions of Theorem \ref{2.6} that $b$
is bounded from the above. Let $\bar{w}=e^{-\lambda t}w$. Then we have
\begin{equation*}
\begin{split}
\frac{\partial }{\partial t}\bar{w}&=e^{-\lambda t}\frac{\partial w}{\partial t}-\lambda e^{-\lambda t} w\\
&=e^{-\lambda t}\left(\frac{\partial w}{\partial t}-\lambda w\right)\\
&=e^{-\lambda t}[a\Delta_{\th_0}w-(b+\lambda)w]\\
&=a\Delta_{\th_0}\bar{w}-(b+\lambda)\bar{w}.
\end{split}
\end{equation*}
Thus, we choose $\lambda>0$ sufficiently large such that $b+\lambda>0$. If $w$ is not identically zero on $M\times [0,T]$, then without loss of generality, we can assume $w>0$ somewhere, which means $\bar{w}>0$ at the same point. Since $\bar{w}(0)\equiv 0$, then by Theorem \ref{2.7}, we have $\bar{w}\leq 0$. This is a contradiction. Thus we obtain $\bar{w}\equiv 0$ on $M\times [0,T]$, which means $w\equiv 0$ on $M\times [0,T]$.
\end{proof}

\section{The CR Yamabe soliton}\label{section4}

By using Proposition \ref{prop1.1}, we can prove the following theorem.
The corresponding theorem for the Yamabe soliton was proved in \cite{mamiquel}.

\begin{theorem}\label{thm1.1}
Suppose $(M,\theta)$ is a $3$-dimensional noncompact CR Yamabe soliton
which is nonshrinking, i.e. $\mu\leq 0$.
Assume that $\liminf_{x\to\infty}R_\theta(x)\geq 0$.
Then the Tanaka-Webster scalar curvature $R_\theta$ of $(M,\theta)$ is nonnegative.
\end{theorem}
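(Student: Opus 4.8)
The plan is to argue by contradiction. Suppose $R_\theta$ is negative somewhere and set $m:=\inf_M R_\theta<0$. The first step is to upgrade this to an \emph{attained} interior minimum: since $\liminf_{x\to\infty}R_\theta(x)\geq 0$, for $\delta=-m/2>0$ there is a compact set outside of which $R_\theta>-\delta>m$, so the sublevel set $\{R_\theta\leq -\delta\}$ is compact and nonempty and $m$ is realized at some interior point $p$. At $p$ the full differential of $R_\theta$ vanishes, which gives $\nabla_\theta R_\theta(p)=0$, $(R_\theta)_0(p)=0$, and $\Delta_\theta R_\theta(p)\geq 0$ (with the convention $\int_M(\Delta_\theta u)f\,dV_\theta=-\int_M\langle du,df\rangle_\theta\,dV_\theta$ the sub-Laplacian is nonnegative at an interior minimum).

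Next I would feed this into Proposition \ref{prop1.1}. All first-order terms drop out at $p$, leaving $4\Delta_\theta R_\theta(p)+2m(m-\mu)=0$, i.e. $4\Delta_\theta R_\theta(p)=-2m(m-\mu)$. Combined with $\Delta_\theta R_\theta(p)\geq 0$ this forces $m(m-\mu)\leq 0$, hence $\mu\leq m$ (dividing by $m<0$). For a steady soliton ($\mu=0$) this already reads $0\leq m$, contradicting $m<0$, so the steady case is finished at once. In general we conclude $R_\theta\geq m\geq\mu$ everywhere, and it is convenient to set $v:=R_\theta-\mu\geq 0$. Using the soliton identity $f_0=2(\mu-R_\theta)=-2v$ to eliminate $\mu$, Proposition \ref{prop1.1} rewrites as a Fisher--KPP type equation $\mathcal{L}v=2v(-\mu-v)$ for the drift sub-Laplacian $\mathcal{L}:=4\Delta_\theta-W$ with $W:=J(\nabla_\theta f)+fT$; the goal becomes $v\geq-\mu$.

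When the infimum is exactly $\mu$ (so that $v$ attains the value $0$ at $p$), I would finish with the strong minimum principle. On the open region $\{v<-\mu\}$ one writes the equation as $(\mathcal{L}-2(-\mu-v))v=0$, whose zeroth-order coefficient $-2(-\mu-v)$ is nonpositive there; thus $v$ is a nonnegative supersolution attaining the nonpositive interior value $0$, so it vanishes near $p$, and a standard open--closed argument gives $v\equiv 0$ on $M$, i.e. $R_\theta\equiv\mu<0$, contradicting $\liminf_{x\to\infty}R_\theta\geq 0$.

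The genuinely hard case is the strictly expanding one, $\mu<m<0$, where $v$ has a \emph{strictly positive} interior minimum. Here pointwise principles cannot succeed: on $\{v<-\mu\}$ the coefficient $2(-\mu-v)$ is positive, so $\mathcal{L}v=2v(-\mu-v)$ is of $\cosh$-type and admits profiles with a positive interior minimum, and the obstruction must be removed by a global argument that sees the potential $f$ and the drift $W$ on all of $M$. My plan is an integral identity: test the equation against the compactly supported truncation $(-\mu-\delta-v)_+$, whose support is precompact by the asymptotic hypothesis, and integrate over $M$. Integration by parts turns the sub-Laplacian term into $+4\int|\nabla_\theta v|^2$, and the right-hand side $2\int v(-\mu-v)(-\mu-\delta-v)_+$ is manifestly nonnegative; the crux, and the step I expect to be the main obstacle, is controlling the drift term $\int W(v)\,(-\mu-\delta-v)_+$. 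For this I would use that $T$ preserves $dV_\theta$ (so $\operatorname{div}_\theta T=0$), the skew-symmetry $\langle X,JY\rangle_\theta=-\langle JX,Y\rangle_\theta$, and the commutation identity expressing $\operatorname{div}_\theta(J\nabla_\theta f)$ through $f_0=-2v$, together with the second soliton equation $f_{\alpha\alpha}+\sqrt{-1}A_{\alpha\alpha}f=0$ to absorb the torsion contributions; letting $\delta\to 0$ should then force the bad set to be negligible, contradicting $m<0$ and completing the proof.
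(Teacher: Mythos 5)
Your opening coincides exactly with the paper's entire proof: since $\liminf_{x\to\infty}R_\theta(x)\geq 0$, a negative infimum $m=\inf_M R_\theta<0$ is attained at an interior point $x_1$, where $\nabla_\theta R_\theta=0$, $(R_\theta)_0=0$ and $\Delta_\theta R_\theta\geq 0$, so that \eqref{1.5} collapses to $2m(m-\mu)\leq 0$. At this point the paper simply asserts that $2R_\theta(R_\theta-\mu)=2R_\theta^2-2\mu R_\theta>0$ at $x_1$ because $R_\theta(x_1)<0$ and $\mu\leq 0$, and declares a contradiction; your more cautious reading --- that $m(m-\mu)\leq 0$ yields only $m\geq\mu$ --- is what the computation actually gives. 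Indeed, for $\mu\leq 0$ and $R_\theta(x_1)<0$ the term $-2\mu R_\theta(x_1)$ is nonpositive, so the paper's strict positivity is equivalent to $R_\theta(x_1)<\mu$ and fails whenever $\mu<m<0$ (take $\mu=-2$, $m=-1$: then $2m^2-2\mu m=2-4<0$). So in the steady case your argument and the paper's agree and are complete, and your diagnosis that the strictly expanding case is the real difficulty is accurate: it is precisely the regime in which the paper's one-line conclusion does not follow from what precedes it.

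The genuine gap is that your treatment of that regime is a plan, not a proof. Your handling of the borderline case $m=\mu$ by a strong minimum principle is plausible in outline (the operator $\mathcal{L}=4\Delta_\theta-W$ is a H\"ormander-type operator, the contact distribution is bracket-generating, so a Bony-type propagation argument is available, though you would need to state and verify it), but for $\mu<m<0$ you only propose testing $\mathcal{L}v=2v(-\mu-v)$ against the truncation $(-\mu-\delta-v)_+$, and you yourself flag the drift term as the obstacle. It is one: integrating $f\,Tv$ by parts (using $\operatorname{div}_\theta T=0$ and $f_0=-2v$) produces terms of the form $\int v f\,T\phi$ with no sign, and the $J\nabla_\theta f$ contribution, even after the commutation identity $f_{\alpha\bar\beta}-f_{\bar\beta\alpha}=\sqrt{-1}\,h_{\alpha\bar\beta}f_0$, still leaves cross terms such as $\int v\,\langle\nabla_\theta v,J\nabla_\theta f\rangle_\theta$ over the level set; the second soliton equation constrains $f_{\alpha\alpha}+\sqrt{-1}A_{\alpha\alpha}f$, not these quantities, so nothing you invoke gives the drift integral a definite sign. (The precompactness of the support of the truncation is fine thanks to the $\liminf$ hypothesis, so growth of $f$ is not the issue --- the signs are.) As written, then, your proposal establishes the theorem for $\mu=0$ and for $m=\mu$, and leaves $\mu<m<0$ open; you should not expect the sketched $\delta\to 0$ limit to close it by itself, and it is worth noting that this is exactly the case where the paper's own argument is also deficient.
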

\begin{proof}
Suppose on the contrary that $\inf_M R_\theta<0$. Since $\liminf_{x\to\infty}R_\theta(x)\geq 0$ by assumption,
there exists $x_1\in M$ such that
\begin{equation}\label{1.7}
R_\theta(x_1)=\inf_M R_\theta<0.
\end{equation}
 Then we have
\begin{equation}\label{1.4}
\Delta_\theta R_\theta(x_1)\geq 0,~~\nabla_\theta R_\theta(x_1)=0~~\mbox{ and }~~(R_\theta)_0(x_1)=0.
\end{equation}
Combining (\ref{1.5}) and (\ref{1.4}), we obtain
\begin{equation}\label{1.6}
2R_\theta(R_\theta-\mu)\leq 0~~\mbox{ at }x_1.
\end{equation}
On the other hand, it follows from (\ref{1.7}) and the assumption that $\mu\leq 0$ that
\begin{equation*}
2R_\theta(R_\theta-\mu)=2R_\theta^2-2\mu R_\theta>0
~~\mbox{ at }x_1,
\end{equation*}
which contradicts to (\ref{1.6}).
This proves that $\inf_M R_\theta\geq 0$, as required.
\end{proof}

Next, we consider the CR Yamabe soliton on the
Heisenberg group $\mathbb{H}^{n}$
equipped with the standard contact form.
The standard contact form $\theta_0$ on the Heisenberg group $\mathbb{H}^{n}$ is given by
$$\theta_0=dt+\sqrt{-1}\sum_{i=1}^n(z_id\overline{z}_i-\overline{z}_idz_i).$$
Note that both of the Tanaka-Webster scalar curvature and torsion
of $(\mathbb{H}^{n},\theta_0)$ vanish.
Hence, it follows from (\ref{1.2}) that
any CR Yamabe soliton $(\mathbb{H}^{n},\theta_0,f)$ is given by
\begin{equation}\label{1.9}
\frac{1}{2}f_0=\mu~~\mbox{ and }~~
f_{\alpha\alpha}=0~~\mbox{ for all }\alpha=1,2,...,n,
\end{equation}
for some real-valued function $f$ and some constant $\mu$.
The following theorem classifies all the CR Yamabe solition $(\mathbb{H}^{n},\theta_0,f)$.

\begin{theorem}\label{main}
Suppose that $(\mathbb{H}^{n},\theta_0,f)$ is a CR Yamabe soliton. Then
the potential function $f$ must be in the form of
\begin{equation}\label{1.13}
f(z,t)=2\mu t+\sum_{
p_i,q_i=0,1}C_{p_1q_1\cdots p_nq_n}z_1^{p_1}\overline{z}_1^{q_1}z_2^{p_2}\overline{z}_2^{q_2}\cdots z_n^{p_n}\overline{z}_n^{q_n}
\end{equation}
where $C_{p_1q_1\cdots p_nq_n}$ are constants such that
$f$ is a real-valued function.
\end{theorem}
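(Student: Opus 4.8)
The plan is to exploit the fact that the Tanaka-Webster connection of $(\mathbb{H}^{n},\theta_0)$ is flat with vanishing torsion, which reduces the soliton system \eqref{1.9} to a pair of elementary PDEs for $f$ in the ambient coordinates $(z,t)$. First I would record that on the Heisenberg group one may take the admissible coframe to be $\theta^\alpha=dz^\alpha$, so that $d\theta^\alpha=0$; by the uniqueness asserted in the structure equations of Section \ref{section1} this forces $\omega_\beta^\alpha\equiv 0$ and $\tau^\alpha\equiv 0$ (consistent with the stated vanishing of the scalar curvature and torsion). Consequently the second covariant derivative collapses to $f_{\alpha\alpha}=Z_\alpha Z_\alpha f$ (no sum), while the first equation in \eqref{1.9} reads simply $\partial f/\partial t = 2\mu$.

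Integrating in $t$ gives $f(z,t)=2\mu t+h(z,\bar z)$ for a smooth, $t$-independent function $h$. I would then substitute $Z_\alpha=\tfrac{1}{\sqrt 2}(\partial_{z^\alpha}+\sqrt{-1}\,\bar z^\alpha\partial_t)$ and expand
\[
2\,f_{\alpha\alpha}=\partial_{z^\alpha}^2 f+2\sqrt{-1}\,\bar z^\alpha\,\partial_{z^\alpha}\partial_t f-(\bar z^\alpha)^2\,\partial_t^2 f.
\]
Since $\partial_t f=2\mu$ is constant, the last two terms vanish, and the equation $f_{\alpha\alpha}=0$ reduces to $\partial_{z^\alpha}^2 h=0$ for every $\alpha$. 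Because $f$ is real-valued, conjugating (equivalently, using $f_{\bar\alpha\bar\alpha}=\overline{f_{\alpha\alpha}}=0$) yields the companion relations $\partial_{\bar z^\alpha}^2 h=0$ for every $\alpha$.

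It remains to show that a smooth $h$ annihilated by all of $\partial_{z^\alpha}^2$ and $\partial_{\bar z^\alpha}^2$ is exactly of the multilinear form appearing in \eqref{1.13}. Here I would argue by induction on the number of treated variables: each relation $\partial_{z^\alpha}^2 h=0$ says $h$ is affine in $z^\alpha$ with coefficients depending on the remaining variables, and likewise for $\bar z^\alpha$; peeling off one pair $(z^\alpha,\bar z^\alpha)$ at a time, and noting that the operators $\partial_{z^\alpha}^2,\partial_{\bar z^\beta}^2$ all commute so the reductions may be applied in any order, one concludes that $h$ is a polynomial in which each $z_i$ and each $\bar z_i$ occurs to degree $0$ or $1$. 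Writing the coefficients as $C_{p_1q_1\cdots p_nq_n}$ gives \eqref{1.13}, and the reality of $f$ is precisely the constraint that the $C_{p_1q_1\cdots p_nq_n}$ be chosen so that $f$ is real-valued. The only delicate point is the vanishing of the connection forms in the first step; once that is secured the rest is a direct, if slightly tedious, elementary computation, so I anticipate no serious obstacle.
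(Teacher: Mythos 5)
Your proof is correct, and its skeleton coincides with the paper's: both integrate the first equation of \eqref{1.9} to get $f=2\mu t+h(z,\bar z)$, reduce the second to $\partial_{z^\alpha}^2h=0$ and, by reality of $f$, $\partial_{\bar z^\alpha}^2h=0$ for every $\alpha$, and then deduce multilinearity. Where you genuinely diverge is in how the endgame is run. You fix the flat picture once and for all (your flatness step is secure: with $\theta^\alpha=\sqrt{2}\,dz^\alpha$ one has $d\theta^\alpha=0$ and $h_{\alpha\bar\beta}=\delta_{\alpha\beta}$ constant, so $\omega_\beta^\alpha\equiv0$, $\tau^\alpha\equiv0$ by uniqueness of the Tanaka--Webster connection) and then work entirely with commuting Wirtinger operators and a pair-by-pair induction; the paper instead stays covariant, importing Lee's commutation identities \eqref{1.15}, which with $g_0=0$ give exactly the symmetry $g_{\alpha\beta}=g_{\beta\alpha}$, $g_{\alpha\bar\beta}=g_{\bar\beta\alpha}$ that coordinates hand you for free, and performing successive integrations that produce the auxiliary functions $E^\beta,F^\beta,G^\beta$ and $H^\beta,I^\beta,J^\beta$ of \eqref{1.20} and \eqref{1.22}, from which \eqref{1.13} is read off rather tersely. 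Your route is more elementary and in fact makes the final multilinearity step more explicit than the paper's closing sentence. One precision to add: $\partial_{z^\alpha}^2h=0$ does \emph{not} say $h$ is affine in $z^\alpha$ with coefficients free of the $\alpha$-th variable; it says $\partial_{z^\alpha}h$ is anti-holomorphic in that slot, so the affine coefficients may still depend on $\bar z^\alpha$. Only after combining with $\partial_{\bar z^\alpha}^2h=0$ (apply $\partial_{z^\alpha}$ to isolate and kill the top coefficient) do you obtain the multi-affine form $h=a+b\,z^\alpha+c\,\bar z^\alpha+d\,z^\alpha\bar z^\alpha$ with $a,b,c,d$ independent of the pair $(z^\alpha,\bar z^\alpha)$; since extracting these coefficients commutes with the operators $\partial_{z^\beta}^2,\partial_{\bar z^\beta}^2$ for $\beta\neq\alpha$, the induction then runs exactly as you describe.
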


For example,
we can take
$$C_{p_1q_1\cdots p_nq_n}
=\left\{
   \begin{array}{ll}
     C, & \hbox{when $p_i=q_i=1$ and $p_j=q_j=0$ for $j\neq i$, where $1\leq i\leq n$;} \\
     0, & \hbox{otherwise.}
   \end{array}
 \right.
$$
for some real constant $C$
to get
$f(z,t)=2\mu t+C|z|^2$ satisfying (\ref{1.9}), which has already been pointed out in \cite{Cao}.

\begin{proof}[Proof of Theorem \ref{main}]
To prove Theorem \ref{main}, we are going to show that
any real function $f$ satisfying
(\ref{1.9}) must be in the form of (\ref{1.13}).
We integrate the first equation in (\ref{1.9}) to get
\begin{equation}\label{1.14}
f(z,t)=2\mu t+g
\end{equation}
for some real-valued function $g$ depending on $z$ and $\overline{z}$.
Combining this with the second equation in (\ref{1.9}), we get
$g_{\alpha\alpha}=g_{\overline{\alpha}\,\overline{\alpha}}=0$
for all $\alpha=1,2,...,n$.
This implies
\begin{equation}\label{1.20a}
g_\alpha\mbox{ does not depend on }z_\alpha,
\mbox{ or equivalently, }
g_{\overline{\alpha}}\mbox{ does not depend on }\overline{z}_\alpha.
\end{equation}
Note that (see (2.14) in \cite{Lee})
\begin{equation}\label{1.15}
g_{\alpha\beta}=g_{\beta\alpha}~~\mbox{ and }~~g_{\alpha\overline{\beta}}-g_{\overline{\beta}\alpha}=\sqrt{-1} h_{\alpha\overline{\beta}} g_0
,
\end{equation}
where $h_{\alpha\overline{\beta}}$ is given by
$$d\theta=\sqrt{-1}h_{\alpha\overline{\beta}}\theta_\alpha\wedge\theta_{\overline{\beta}}.$$
Since $g$ does not depend on $t$, we have $g_0=0$.
Hence, it follows from  (\ref{1.15}) that
\begin{equation}\label{1.16}
g_{\alpha\beta}=g_{\beta\alpha}~~\mbox{ and }~~
g_{\alpha\overline{\beta}}=g_{\overline{\beta}\alpha}~~\mbox{ for all }\alpha, \beta.
\end{equation}
It follows from (\ref{1.20a})
that $g_\beta$ does not depend on $z_\beta$,
which implies that
$g_{\beta\alpha}$ does not depend on $z_\beta$ for all $\alpha$.
Since $g_{\alpha\beta}=g_{\beta\alpha}$ by (\ref{1.16}), we can conclude that
$g_{\alpha\beta}$ does not depend on $z_\beta$ for all $\alpha$.
Integrating $g_{\alpha\beta}$ with respect to $z_\beta$, we have,  for any $\alpha=1, 2,..., n$,
\begin{equation}\label{1.17}
g_\alpha= E^\beta(z) z_\beta+ F^\beta(z)
\end{equation}
for some functions $E^\beta$ and $F^\beta$ which do not depend on $z_\beta$.
On the other hand, by (\ref{1.20a}), $g_{\overline{\beta}}$ does not depend on $\overline{z}_\beta$,
which implies that
$g_{\overline{\beta}\alpha}$ does not depend on $\overline{z}_\beta$.
Since $g_{\alpha\overline{\beta}}=g_{\overline{\beta}\alpha}$   by (\ref{1.16}), we can conclude that,
$g_{\alpha\overline{\beta}}$ does not depend on $\overline{z}_\beta$ for all $\alpha$.
Integrating $g_{\alpha\overline{\beta}}$ with respect to $z_\beta$, we have,  for any $\alpha=1, 2,..., n$,
\begin{equation}\label{1.18}
g_\alpha= H^\beta(z)\overline{z}_\beta+I^\beta(z).
\end{equation}
for some functions $H^\beta$ and $I^\beta$ which do not depend on $\overline{z}_\beta$.
Differentiating (\ref{1.17}) with respect to $z_\alpha$ and using
the fact that $g_{\alpha\alpha}=0$, we obtain
\begin{equation}\label{1.19}
0= \frac{\partial E^\beta(z)}{\partial z_\alpha} z_\beta+ \frac{\partial F^\beta(z)}{\partial z_\alpha}
\end{equation}
for $\beta\neq\alpha$.
It follows from (\ref{1.19}) that
$\displaystyle\frac{\partial E^\beta(z)}{\partial z_\alpha}=\frac{\partial F^\beta(z)}{\partial z_\alpha}=0$
for $\beta\neq\alpha$.
In particular, for any $\beta\neq\alpha$,
$E^\beta(z)$ and $F^\beta(z)$ does not depend on $z_\alpha$. Hence, we can integrate
(\ref{1.17}) with respect to $z_\alpha$ to obtain
\begin{equation}\label{1.20}
g(z)= E^\beta(z) z_\beta z_\alpha+ F^\beta(z)z_\alpha+G^\beta(z)~~\mbox{ when }\beta\neq\alpha,
\end{equation}
for some function $G^\beta$ which does not depend on $z_\alpha$
and some functions $E^\beta$ and $F^\beta$ which do not depend on $z_\beta$ and $z_\alpha$.
Similarly, differentiating (\ref{1.18}) with respect to $z_\alpha$ and using
the fact that $g_{\alpha\alpha}=0$, we obtain
\begin{equation}\label{1.21}
0= \frac{\partial H^\beta(z)}{\partial z_\alpha} \overline{z}_\beta+ \frac{\partial I^\beta(z)}{\partial z_\alpha}
\end{equation}
for all $\beta$.
It follows from (\ref{1.21}) that
$\displaystyle\frac{\partial H^\beta(z)}{\partial z_\alpha}=\frac{\partial I^\beta(z)}{\partial z_\alpha}=0$
for all $\beta$.
In particular, for any $\beta$,
$H^\beta(z)$ and $I^\beta(z)$ does not depend on $z_\alpha$. Hence,
we can integrate
(\ref{1.18}) with respect to $z_\alpha$ to obtain
\begin{equation}\label{1.22}
g(z)= H^\beta(z) \overline{z}_\beta z_\alpha+ I^\beta(z)z_\alpha+J^\beta(z)
\end{equation}
for some function $J^\beta$ which does not depend on $z_\alpha$
and some functions $H^\beta$ and $I^\beta$ which do not depend on $\overline{z}_\beta$ and $z_\alpha$.
Now (\ref{1.13}) follows from (\ref{1.20}) and (\ref{1.22}).
\end{proof}

We have the following transformation law for the Tanaka-Webster scalar curvature
and the torsion: if $\hat{\theta}=e^{2u}\theta$, then we have
(see Lemma 5.6 and Proposition 5.15 in \cite{Lee})
\begin{equation}\label{1.3}
\begin{split}
e^{2u}\widehat{R}&=R+2(n+1)\Delta_{\theta} u-4n(n+1)u_\alpha u^\alpha,\\
\widehat{A}_{\alpha\beta}&=e^{-2u}(A_{\alpha\beta}+2iu_{\alpha\beta}-4iu_\alpha u_\beta).
\end{split}
\end{equation}
It follows from
(\ref{1.3}) that
the Tanaka-Webster scalar curvature and torsion of $(\mathbb{H}^{n},\hat{\theta}=e^{2u}\theta_0)$ are given by
\begin{equation}\label{1.8}
\begin{split}
\widehat{R}&=e^{-2u}(2(n+1)\Delta_{\theta_0} u-4n(n+1)u_\alpha u^\alpha),\\
\widehat{A}_{\alpha\beta}&=e^{-2u}(2iu_{\alpha\beta}-4iu_\alpha u_\beta).
\end{split}
\end{equation}
Also, the vector field $\hat{T}$, the frame and the connection of $\hat{\theta}=e^{2u}\theta_0$ are given by (see (5.7) and (5.14) in \cite{Lee})
\begin{equation}\label{1.8a}
\begin{split}
\hat{Z}_{\alpha}&=e^{-u}Z_\alpha,~~\hat{T}=e^{-2u}(T+2iu^{\overline{\gamma}}Z_{\overline{\gamma}}-2iu^\gamma Z_\gamma)\mbox{ and }\\
\widehat{{\omega_\beta}}^\alpha
&={\omega_\beta}^\alpha+2(u_\beta\theta^\alpha-u^\alpha\theta_\beta)
+\delta_\beta^\alpha(u_\gamma\theta^\gamma-u^\gamma\theta_\gamma)\\
&\hspace{4mm}+i({u^\alpha}_\beta+{u_\beta}^\alpha+4f_\beta f^\alpha+4\delta_\beta^\alpha u_\gamma u^\gamma)\theta.
\end{split}
\end{equation}
It follows from (\ref{1.8a}) that
\begin{equation}\label{1.8b}
\begin{split}
\hat{f}_{\alpha\alpha}
&=\hat{Z}_{\alpha}\hat{Z}_{\alpha}f-\widehat{{\omega_\beta}}^\alpha(\hat{Z}_{\alpha})f\\
&=e^{-u}Z_\alpha (e^{-u}Z_\alpha f)-{\omega_\alpha}^\alpha(e^{-u}Z_\alpha) (e^{-u}Z_\alpha f)
-3u_\alpha \theta^\alpha(e^{-u}Z_\alpha) (e^{-u}Z_\alpha f)\\
&=e^{-2u}f_{\alpha\alpha}+e^{-u} Z_\alpha(e^{-u}) Z_\alpha f-3e^{-2u}u_\alpha f_\alpha\\
&=e^{-2u}(f_{\alpha\alpha}-4u_\alpha f_\alpha).
\end{split}
\end{equation}
Now suppose that $(\mathbb{H}^{n},\hat{\theta}=e^{2u}\theta_0,f)$ is a CR Yamabe soliton.
Then, by (\ref{1.2}), (\ref{1.8})-(\ref{1.8b}), we have
\begin{equation}\label{1.10}
\begin{split}
&e^{-2u}(2(n+1)\Delta_{\theta_0} u-4n(n+1)u_\alpha u^\alpha)+\frac{1}{2}e^{-2u}(f_0+
2iu^{\overline{\gamma}}f_{\overline{\gamma}}-2iu^\gamma f_\gamma)=\mu,\\
&e^{-2u}(f_{\alpha\alpha}-4u_\alpha f_\alpha)-e^{-2u}(2u_{\alpha\alpha}-4u_\alpha^2) f=0~~\mbox{ for all }\alpha=1,2,...,n.
\end{split}
\end{equation}

If $u$ depends only on $t$, then (\ref{1.10}) reduces to
\begin{equation}\label{1.30}
\frac{1}{2}e^{-2u}f_0=\mu~~\mbox{ and }~~f_{\alpha\alpha}=0~~\mbox{ for all }\alpha=1,2,...,n.
\end{equation}
Integrating the first equation in (\ref{1.30}) with respect to $t$, we obtain
\begin{equation*}
f(z,t)=2\mu\int e^{2u}dt +g
\end{equation*}
for some real-valued function $g$ depending only on $z$ and $\overline{z}$.
Submitting this into the second equation of (\ref{1.30}), we get
$g_{\alpha\alpha}=g_{\overline{\alpha}\,\overline{\alpha}}=0$
for all $\alpha=1,2,...,n$.
Now, we can follow the proof of Theorem \ref{main} to conclude
the following:

\begin{cor}
Suppose that $(\mathbb{H}^{n},e^{2u}\theta_0,f)$ is a CR Yamabe soliton
such that $u$ is a function depending only on $t$. Then
the potential function $f$ must be in the form of
\begin{equation*}
f(z,t)=2\mu \int e^{2u}dt+\sum_{
p_i,q_i=0,1}C_{p_1q_1\cdots p_nq_n}z_1^{p_1}\overline{z}_1^{q_1}z_2^{p_2}\overline{z}_2^{q_2}\cdots z_n^{p_n}\overline{z}_n^{q_n}
\end{equation*}
where $C_{p_1q_1\cdots p_nq_n}$ are constants such that
$f$ is a real-valued function.
\end{cor}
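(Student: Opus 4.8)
The plan is to reduce the soliton system (\ref{1.9}) to a pair of scalar second-order conditions on a single function of the horizontal variables and then to integrate those conditions one variable at a time. First I would dispose of the time dependence: integrating the equation $\tfrac12 f_0=\mu$ in $t$ (recall $T=\partial/\partial t$ on $\mathbb{H}^n$) gives $f(z,t)=2\mu t+g(z,\overline z)$ for a real-valued $g$ independent of $t$, exactly as in (\ref{1.14}). The key geometric input is that the Tanaka-Webster connection of $(\mathbb{H}^n,\theta_0)$ is flat with vanishing torsion: since the standard admissible coframe $\theta^\alpha=\sqrt2\,dz_\alpha$ is closed, the structure equations are solved by $\omega_\beta^\alpha=0$ and $\tau^\alpha=0$, so all connection $1$-forms vanish in the frame $\{Z_\alpha\}$ and $f_{\alpha\alpha}=Z_\alpha Z_\alpha f$. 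Because $g$ is $t$-independent and $Z_\alpha=\tfrac{1}{\sqrt2}(\partial_{z_\alpha}+\ii\,\overline z_\alpha\,\partial_t)$, a short computation shows $Z_\alpha Z_\alpha(2\mu t)=0$ and $Z_\alpha Z_\alpha g=\tfrac12\partial_{z_\alpha}^2 g$. Hence the second equation of (\ref{1.9}) is equivalent to $\partial_{z_\alpha}^2 g=0$ for every $\alpha$, and taking complex conjugates (using that $g$ is real) also yields $\partial_{\overline z_\alpha}^2 g=0$ for every $\alpha$.

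At this point the problem is purely one of elementary analysis in the $2n$ mutually commuting Wirtinger operators $\partial_{z_1},\partial_{\overline z_1},\dots,\partial_{z_n},\partial_{\overline z_n}$: I must show that a smooth function annihilated by the square of each of them is a polynomial of degree at most one in each of the $2n$ independent variables $z_1,\overline z_1,\dots,z_n,\overline z_n$. I would prove this by induction over the variables. The relation $\partial_{z_1}^2 g=0$ forces $g=z_1 A+B$ with $A=\partial_{z_1}g$ and $B=g-z_1A$ both independent of $z_1$; because the Wirtinger operators commute, $A$ and $B$ inherit the vanishing of the squares of all the remaining operators. Applying $\partial_{z_2}^2=0$, then $\partial_{\overline z_2}^2=0$, and so on to $A$ and $B$, and iterating through all $2n$ variables, one concludes that $g$ is a linear combination of the monomials $z_1^{p_1}\overline z_1^{q_1}\cdots z_n^{p_n}\overline z_n^{q_n}$ with each $p_i,q_i\in\{0,1\}$. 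Combined with the term $2\mu t$ this is exactly the asserted form (\ref{1.13}); reality of $f$ then translates into the stated constraint $C_{p_1 q_1\cdots p_n q_n}=\overline{C_{q_1 p_1\cdots q_n p_n}}$ on the coefficients.

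The one point requiring care, and the only genuine subtlety, is that the mixed monomials such as $z_\alpha\overline z_\alpha$ must be retained while the pure squares $z_\alpha^2$ and $\overline z_\alpha^2$ are excluded. This is why the argument must keep $\partial_{z_\alpha}$ and $\partial_{\overline z_\alpha}$ as distinct independent operators and must not be recast in the real coordinates: the hypotheses kill only the holomorphic and antiholomorphic second derivatives, and indeed $\partial_{x_\alpha}^2 g=2\,\partial_{z_\alpha}\partial_{\overline z_\alpha}g$ need not vanish, corresponding precisely to the admissible term $z_\alpha\overline z_\alpha$ (the case $p_i=q_i=1$ highlighted after the statement). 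Beyond this, the remaining work is the routine bookkeeping of ensuring that each coefficient function loses its dependence on the correct variable at each stage of the induction, which the commutativity of the Wirtinger operators guarantees.
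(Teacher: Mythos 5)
Your proposal proves Theorem \ref{main} --- the soliton equations \eqref{1.9} on the standard $(\mathbb{H}^n,\theta_0)$ --- but not the Corollary, which concerns the conformally rescaled form $\hat{\theta}=e^{2u(t)}\theta_0$. Everything in the Corollary beyond Theorem \ref{main} is the reduction of the soliton system \eqref{1.2} for $\hat{\theta}$ to the system \eqref{1.30}, and this step is entirely absent from your argument. It requires the transformation laws \eqref{1.8}--\eqref{1.8b}: the Reeb field of $\hat{\theta}$ is $\hat{T}=e^{-2u}(T+2iu^{\overline{\gamma}}Z_{\overline{\gamma}}-2iu^{\gamma}Z_{\gamma})$, the covariant Hessian changes as $\hat{f}_{\alpha\alpha}=e^{-2u}(f_{\alpha\alpha}-4u_\alpha f_\alpha)$, and the torsion $\widehat{A}_{\alpha\alpha}=e^{-2u}(2iu_{\alpha\alpha}-4iu_\alpha u_\alpha)$ of $\hat{\theta}$ is not manifestly zero, so your appeal to the flatness of the Tanaka--Webster connection of $\theta_0$ (i.e.\ $\omega_\beta^\alpha=0$, $\tau^\alpha=0$, hence $f_{\alpha\alpha}=Z_\alpha Z_\alpha f$) cannot be applied to $\hat{\theta}$ without first carrying out this reduction, which is what produces \eqref{1.10} and then \eqref{1.30}. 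Concretely, your first equation is the wrong one for the statement at hand: the reduced first soliton equation is $\tfrac{1}{2}e^{-2u}f_0=\mu$, not $\tfrac{1}{2}f_0=\mu$, and integrating it in $t$ yields the leading term $2\mu\int e^{2u}\,dt$, whereas your integration produces $2\mu t$ and you conclude with the form \eqref{1.13} of Theorem \ref{main} rather than the form asserted in the Corollary. The two agree only when $u$ is constant, so as written you have proved a different (and already stated) result.

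The part of your argument that handles $g$ is correct, and is in fact cleaner than the corresponding step in the paper: where the paper derives $g_{\alpha\alpha}=g_{\overline{\alpha}\,\overline{\alpha}}=0$ and then re-runs the proof of Theorem \ref{main} through Lee's commutation identities \eqref{1.15} and an integration scheme with auxiliary functions $E^\beta, F^\beta, H^\beta, I^\beta$, your observation that on the flat group $g_{\alpha\alpha}=\tfrac{1}{2}\partial_{z_\alpha}^2 g$ for $t$-independent $g$, followed by the induction over the $2n$ commuting Wirtinger operators, replaces all of that machinery; you correctly keep $z_\alpha$ and $\overline{z}_\alpha$ as independent variables so that the mixed monomials $z_\alpha\overline{z}_\alpha$ survive while the pure squares are excluded, and the coefficients come out constant because at the final stage they are annihilated by all $2n$ first-order operators. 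So the repair is local rather than structural: insert the reduction from \eqref{1.10} to \eqref{1.30} exactly as the paper does using \eqref{1.8}--\eqref{1.8b} and the hypothesis $u=u(t)$, integrate $\tfrac{1}{2}e^{-2u}f_0=\mu$ to obtain the correct leading term $2\mu\int e^{2u}\,dt$, verify that after subtracting this term the second equation indeed forces $g_{\alpha\alpha}=g_{\overline{\alpha}\,\overline{\alpha}}=0$ for the $t$-independent remainder $g$, and then your Wirtinger induction finishes the proof verbatim.
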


\bibliographystyle{amsplain}

\end{document}